\newtheorem{thm}{Theorem}[section]
\newtheorem{cor}[thm]{Corollary}
\newtheorem{lem}[thm]{Lemma}
\newtheorem{prop}[thm]{Proposition}
\newtheorem{exa}[thm]{Example}
\theoremstyle{definition}
\newtheorem{rem}[thm]{Remark}
\numberwithin{equation}{section}
\begin{document}
\title[matrices over a commutative ring]{Matrices over a commutative ring as sums of three idempotents or three involutions}

\author[Tang, Zhou and Su]{Gaohua Tang, Yiqiang Zhou and Huadong Su}

\address{College of Sciences, Qinzhou University,  Qinzhou, Guangxi, P.R. China;
School of Mathematics and Statistics, Guangxi Teachers Education University, Nanning, Guangxi, P.R. China}
\email{tanggaohua@163.com}

\address{Department of Mathematics and Statistics, Memorial University of Newfoundland, St.John's, NL A1C 5S7, Canada}
\email{zhou@mun.ca}

\address{School of Mathematics and Statistics, Guangxi Teachers Education University, Nanning, Guangxi, P.R. China}
\email{huadong.su@163.com}

\subjclass[2000]{Primary 16S50, 16U60, 16U90}
\keywords{idempotent, involution, sum of three idempotents, sum of three involutions, matrix ring}

\baselineskip=20pt
\begin{abstract} 
Motivated by Hirano-Tominaga's work \cite{HT} on rings for which every element is a sum of two idempotents and by 
de Seguins Pazzis's results \cite{de} on decomposing every matrix over a field of positive characteristic as a sum of idempotent matrices, we address decomposing every matrix over a commutative ring as a sum of three idempotent matrices and, respectively, as a sum of  three involutive matrices. 
\end{abstract}
\maketitle

\bigskip

\section{Introduction}
In this paper, we discuss when every matrix in the matrix ring $\mathbb M_n(R)$ over a commutative ring $R$ is a sum of three idempotent matrices or of three involutive matrices. The motivation comes from two sources. On the one hand, Hirano-Tominaga \cite{HT} showed that, 
in any proper matrix ring, not every matrix is a sum of two idempotents. On the other hand, de Seguins Pazzis's \cite{de} proved  
that, for a field $F$ and an integer $n\ge 1$, if $|F|\le 3$, then every matrix in $\mathbb M_n(F)$ is a sum of three idempotents.
Therefore, $k=3$ is the smallest positive integer such that, for some ring $R$, every matrix over $R$ is a sum of $k$ idempotents, and one may ask: for which rings $R$, is every matrix over $R$ a sum of three idempotents? Here we address this question for matrices over a commutative ring. The following are the main results. The first one shows that the converse of  de Seguins Pazzis's result holds.

\begin{thm}
Let $F$ be a field and $n\ge 1$. The following are equivalent:
\begin{enumerate}
\item Every matrix in ${\mathbb M}_n(F)$ is a sum of three idempotents.
\item Every invertible matrix in ${\mathbb M}_n(F)$ is a sum of three idempotents.
\item $F\cong \mathbb Z_2$ or $F\cong \mathbb Z_3$.
\end{enumerate}
\end{thm}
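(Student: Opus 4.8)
The implications $(3)\Rightarrow(1)$ and $(1)\Rightarrow(2)$ need nothing: the first is de Seguins Pazzis's theorem quoted above, and the second is trivial. The whole content is $(2)\Rightarrow(3)$, which I would prove in two stages.

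\emph{Stage 1: $F$ is a finite prime field.} An idempotent $E\in\mathbb M_n(F)$ is conjugate to $\operatorname{diag}(I_r,0)$, so $\operatorname{tr}(E)=r\cdot 1_F$ with $0\le r\le n$; hence every sum of three idempotents has trace in $\{0,1,\dots,3n\}\cdot 1_F$. For $n=1$ this is already $F\setminus\{0\}\subseteq\{1,2,3\}\cdot 1_F$. For $n\ge 2$, each $a\in F$ with $a\ne(n-1)\cdot1_F$ is the trace of the invertible matrix $\operatorname{diag}(a-(n-1)\cdot1_F,1,\dots,1)$, hence by $(2)$ lies in $\mathbb Z\cdot1_F$; the remaining value lies there trivially. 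Thus $F=\mathbb Z\cdot1_F$ equals its prime subfield, and being contained in $\{0,\dots,3n\}\cdot1_F$ it is finite, so $F\cong\mathbb Z_p$ for a prime $p$. (Note $\mathbb F_4$ is already excluded: there $\{1,2,3\}\cdot1=\{0,1\}$ cannot contain $F\setminus\{0\}$.)

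\emph{Stage 2: $p\le 3$.} For $n=1$ this is immediate, since $-1=p-1$ is a unit of $\mathbb Z_p$ but $-1\notin\{1,2,3\}$ when $p\ge 5$. For $n\ge 2$ I would invoke the classical structure of sums of two idempotents: $A$ is a sum of two idempotents iff $A-I$ is a difference of two idempotents, and, over $\overline{\mathbb Z_p}$ (using $\operatorname{char}\ne2$), a difference of two idempotents has in its Jordan form every block of size $\ge2$ at eigenvalue $0$, while for each $\lambda\notin\{1,-1\}$ the multiplicities of $\lambda$ and $-\lambda$ agree. Applied to the central matrix $cI_n$, after conjugating so that $E_1=\operatorname{diag}(I_r,0)$: if $cI_n=E_1+E_2+E_3$ then $\operatorname{diag}\bigl((c-2)I_r,(c-1)I_{n-r}\bigr)$ must be a difference of two idempotents, and a short case check then shows $cI_n$ is a sum of three idempotents precisely when $c\in\{0,1,2,3\}\cdot1_F$, or $c=\tfrac32\cdot1_F$ with $n$ even (and all such values genuinely occur). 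Hence, whenever $-1\cdot1_F$ lies outside this set --- that is, when $p\ge7$, or when $p=5$ with $n$ odd --- the invertible matrix $-I_n$ fails to be a sum of three idempotents.

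This leaves the case $p=5$ with $n$ even, which I expect to be the real obstacle: the trace bound is vacuous there and \emph{every} scalar matrix is a sum of three idempotents, so one must produce a non-central witness. I would try a diagonal matrix such as $M=\operatorname{diag}(-1,1,\dots,1)$; then $M=E_1+E_2+E_3$ forces $M-E_1-I$ (a rank-one perturbation of $-E_1$) to be a difference of two idempotents, and the matrix-determinant lemma gives its characteristic polynomial as $x^{n-r-1}(x+1)^{r-1}\bigl(x^2+3x+2-2(E_1)_{11}\bigr)$ for $1\le r=\operatorname{rank}E_1\le n-1$ (the cases $r=0,n$ being trivial). The two extra eigenvalues have sum $-3\ne0$, which makes the symmetric pattern of the structure theorem impossible --- except when they coincide, in which case over $\mathbb Z_5$ the common value is $1$, and one must rule out $M-E_1-I$ having a genuine $2\times 2$ Jordan block at $1$, something invisible to the characteristic polynomial. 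Pushing this Jordan-structure check through (it is a short computation when $n=2$) is where the argument has real content; everything else is bookkeeping with traces and the structure theorem.
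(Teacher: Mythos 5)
Your Stage 1 and the $p\ge 7$ and $p=5$-with-$n$-odd parts of Stage 2 are correct and essentially reproduce the paper's Lemmas \ref{lem2.2} and \ref{lem2.4}: the same trace-of-idempotents argument pins $F$ down to a prime field (your variant with ${\rm diag}(a-(n-1)\cdot 1_F,1,\dots,1)$ and the bound ${\rm tr}\in\{0,\dots,3n\}\cdot 1_F$ even lets you bypass the paper's separate computation that $2^2\cdot3\cdot5=0$), and the same witness $-I_n$ together with the $\lambda\leftrightarrow 2\cdot 1_F-\lambda$ symmetry for sums of two idempotents disposes of $p\ge 7$ and of $p=5$ with $n$ odd. (Note the paper only ever uses the weak form of that symmetry, on algebraic multiplicities; your stronger Jordan-form statement is not needed there.)

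The genuine gap is exactly where you flag it, namely $p=5$ with $n$ even (the paper's Lemma \ref{lem2.5}), and the sketch you offer is not on track to close it. After your eigenvalue analysis the only surviving configuration for $N=M-E_1-I$ is the spectrum $1,1,0^{n-r-1},(-1)^{r-1}$, and that spectrum \emph{is} attained by differences of two idempotents, e.g.\ ${\rm diag}(1,1,0,\dots,0)-{\rm diag}(0,\dots,0,1,\dots,1)$; so ``ruling out a $2\times 2$ Jordan block at $1$'' would yield consistency, not a contradiction --- you would instead have to prove such a block is \emph{forced}, which the characteristic polynomial cannot detect and which you only have for $n=2$. Worse, the obvious numerical check degenerates for your witness ${\rm diag}(-1,I_{n-1})$: the surviving spectrum gives ${\rm tr}(N)=3-r$, while ${\rm tr}(M)-{\rm tr}(E_1)-n=-2-r$, and these agree in $\mathbb Z_5$. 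The paper instead takes $A={\rm diag}(1,-I_{n-1})$ and finishes with a purely numerical obstruction: a Grassmann-formula dimension count shows some $E_i$ has an eigenvalue of multiplicity $r=m+1$, the Symmetry Lemma then forces $A-E_1$ to have characteristic polynomial $(x+\bar 1)^m(x+\bar 2)^m$, and comparing ${\rm tr}(A)=\bar\epsilon-\bar n$ (with $\epsilon=\pm 1$) against the true value $\bar 2-\bar n$ gives the contradiction. Some such extra count is indispensable; the structure theorem for differences of two idempotents alone will not close your remaining case.
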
 

If the underling ring is a commutative ring, we obtain:

\begin{thm} Suppose that every matrix in $\mathbb M_n(R)$ is a sum of three idempotents where 
$R$ is a commutative ring and $n\ge 1$.  Then $J(R)$ is nil and $R/J(R)$ has identity $x^3=x$.
If in addition $R$ is an indecomposable ring, then  $R\cong \mathbb Z_n$ where $n=2,3,$ or $4$.
\end{thm}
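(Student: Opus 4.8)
The plan is to first derive structural consequences for an arbitrary commutative $R$, writing $S=\mathbb M_n(R)$ (and assuming $R\neq 0$), and only afterwards to use indecomposability.

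\emph{Step 1: the residue rings.} Being a sum of three idempotents passes to homomorphic images, and $\mathbb M_n(R/I)\cong S/\mathbb M_n(I)$ for every ideal $I$. Taking $I=\mathfrak m$ maximal, $\mathbb M_n(R/\mathfrak m)$ is a sum of three idempotents over the field $R/\mathfrak m$, so by the field case established above ($(1)\Rightarrow(3)$) we get $R/\mathfrak m\cong\mathbb Z_2$ or $\mathbb Z_3$; in either case $x^3=x$ in $R/\mathfrak m$. Since $J(R)=\bigcap_{\mathfrak m}\mathfrak m$, it follows that $x^3-x\in J(R)$ for all $x$, i.e.\ $R/J(R)$ satisfies $x^3=x$; in particular $R/J(R)$ is a (commutative) von Neumann regular ring.

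\emph{Step 2: $J(R)$ is nil.} The trace of an idempotent matrix over a commutative ring is a ``rank'': if $E\in S$ is idempotent then $\operatorname{Im}E$ is finitely generated projective, so modulo every prime $\mathfrak p$ the trace $\operatorname{tr}E$ reduces to $r_{\mathfrak p}\cdot 1$ with $r_{\mathfrak p}\in\{0,1,\dots,n\}$. Hence, for any $a\in R$, writing $\operatorname{diag}(a,0,\dots,0)=E_1+E_2+E_3$ and taking traces shows that $a$ reduces, modulo every prime, to one of $0,1,\dots,3n$, so $\prod_{j=0}^{3n}(a-j)\in\bigcap_{\mathfrak p}\mathfrak p=\operatorname{nil}(R)$. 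Thus every element of the reduced ring $R/\operatorname{nil}(R)$ is a root of $\prod_{j=0}^{3n}(X-j)$, which forces every prime of $R/\operatorname{nil}(R)$ to be maximal (the corresponding domain embeds in $\{0,1,\dots,3n\}\cdot 1$, hence is a finite field). A reduced zero-dimensional commutative ring is von Neumann regular, so $J(R/\operatorname{nil}(R))=0$, i.e.\ $J(R)\subseteq\operatorname{nil}(R)$; hence $J(R)$ is nil.

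\emph{Step 3: reduction to $\mathbb Z_{p^k}$.} Now assume $R$ indecomposable. Since $J(R)$ is nil, idempotents lift modulo $J(R)$, and as $R$ has no nontrivial idempotents neither does $R/J(R)$; a commutative von Neumann regular ring with no nontrivial idempotents is a field, so $R/J(R)$ is a field, hence $\cong\mathbb Z_2$ or $\mathbb Z_3$ by Step~1. Thus $R$ is local with nil maximal ideal and residue field $\mathbb Z_p$, $p\in\{2,3\}$. Over a local ring every finitely generated projective module is free, so every idempotent $E\in S$ is conjugate to $\operatorname{diag}(I_r,0)$ and $\operatorname{tr}E=r\cdot 1_R$ is a genuine integer multiple of $1_R$. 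As every matrix is a sum of three idempotents and $\operatorname{tr}\colon S\to R$ is onto, $R=\mathbb Z\cdot 1_R$; being local and nonzero, $R$ is then a homomorphic image of $\mathbb Z$ by a prime power, i.e.\ $R\cong\mathbb Z_{p^k}$ with $p\in\{2,3\}$.

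\emph{Step 4: ruling out $\mathbb Z_8$ and $\mathbb Z_9$.} It remains to show $k\le 2$ when $p=2$ and $k=1$ when $p=3$. Since $\mathbb Z_{2^k}\twoheadrightarrow\mathbb Z_8$ for $k\ge3$ and $\mathbb Z_{3^k}\twoheadrightarrow\mathbb Z_9$ for $k\ge2$, by Step~1 it is enough to prove that $\mathbb M_n(\mathbb Z_8)$ and $\mathbb M_n(\mathbb Z_9)$ are \emph{not} sums of three idempotents for every $n$, which I would do by exhibiting an explicit bad scalar matrix. For $\mathbb Z_9$ — where $2$ is a unit, so via $A\mapsto 2A-3I$ ``sum of three idempotents'' is equivalent to ``sum of three involutions'' — I would analyse a hypothetical $4I_n=V_1+V_2+V_3$: each $V_i$ is conjugate to $\operatorname{diag}(I_{s_i},-I_{n-s_i})$; if some $s_i\in\{0,n\}$ the equation collapses to expressing a scalar matrix as a sum of \emph{two} involutions, which is impossible here; otherwise one reduces modulo $3$, the crucial point being that three idempotents over $\mathbb Z_3$ summing to a scalar matrix must pairwise commute, hence are simultaneously block-diagonal with prescribed block sizes, and then the lift of that block pattern back to $\mathbb Z_9$ is obstructed on the diagonal blocks. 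A parallel argument modulo $2$ handles $\mathbb Z_8$ (again using $4I_n$). This uniform-in-$n$ case analysis is the part I expect to be the real work; the earlier steps are comparatively soft. Assembling everything yields $R\cong\mathbb Z_2$, $\mathbb Z_3$, or $\mathbb Z_4$.
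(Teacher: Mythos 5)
Your Steps 1--3 are correct and run parallel to the paper's argument, with two cosmetic differences: you get the identity $x^3=x$ on $R/J(R)$ from the field quotients $R/\mathfrak m\cong\mathbb Z_2,\mathbb Z_3$ rather than from the domain quotients (the paper uses its Corollary on integral domains and the subdirect decomposition of $R/{\rm Nil}(R)$), and you justify ${\rm tr}(E)={\rm rank}(E)\cdot 1_R$ over a local ring via ``finitely generated projective $=$ free'' rather than via the cited diagonalizability result; both are fine. The reduction to $R\cong\mathbb Z_{p^k}$ with $p\in\{2,3\}$ in Step 3 is sound.

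Step 4, however, is a genuine gap: you have only a sketch, and its crux --- the assertion that three idempotents of $\mathbb M_n(\mathbb Z_3)$ summing to a scalar matrix must pairwise commute, and that the resulting block pattern obstructs lifting to $\mathbb Z_9$ --- is neither proved nor obviously true, and the parallel claim for $\mathbb Z_8$ is not even sketched. The whole of Step 4 is unnecessary. The paper's Lemma 2.1 states that if $-1$ is a sum of three idempotents in \emph{any} ring, then $2^2\cdot 3\cdot 5=0$ in that ring. Applying this to the ring $\mathbb M_n(R)$ (where $-I_n$ is by hypothesis a sum of three idempotents) gives $60\cdot I_n=0$, hence $60=0$ in $R$. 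Since $60\ne 0$ in $\mathbb Z_8$ and in $\mathbb Z_9$, your $\mathbb Z_{p^k}$ with $p\in\{2,3\}$ must divide into $\{\mathbb Z_2,\mathbb Z_4,\mathbb Z_3\}$, which finishes the proof. (Note the paper itself needs slightly more at this point: it reaches $60=0$ \emph{before} knowing the residue field is $\mathbb Z_2$ or $\mathbb Z_3$, so it must separately exclude $\mathbb Z_5$ via the lemmas on $-I_n$ and ${\rm diag}(1,-I_{n-1})$ over $\mathbb Z_5$; your Step 1 already excludes residue characteristic $5$, so for you Lemma 2.1 alone closes the argument.)
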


\begin{thm}
Let $R$ be a commutative ring with ${\rm Nil}(R)=0$ {\rm (}e.g., $J(R)=0${\rm )} and $n\ge 1$.  The following are equivalent:
\begin{enumerate}
\item Every matrix in $\mathbb M_n(R)$ is a sum of three idempotents.

\item $R\cong A\times B$, where $A$ is a Boolean ring and $B$ is zero or a subdirect product of $\mathbb Z_3$'s.
\item $R$ has the identity $x^3=x$.
\end{enumerate}
 
\end{thm}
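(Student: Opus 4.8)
The plan is to establish $(1)\Leftrightarrow(3)$ via $(1)\Rightarrow(3)\Rightarrow(1)$, and to treat $(2)\Leftrightarrow(3)$ as a separate, purely ring-theoretic equivalence. The implication $(1)\Rightarrow(3)$ should fall out of the preceding theorem: it tells us that $J(R)$ is nil and that $R/J(R)$ satisfies $x^{3}=x$. Since every element of a nil ideal is nilpotent and $\mathrm{Nil}(R)=0$, we get $J(R)=0$, so $R=R/J(R)$ has the identity $x^{3}=x$.

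For $(3)\Rightarrow(1)$ the idea is to reduce to finitely many finite ground rings and then invoke de Seguins Pazzis's theorem \cite{de} that every matrix over a field with at most $3$ elements is a sum of three idempotents. Given $M\in\mathbb M_{n}(R)$, let $R_{0}\subseteq R$ be the subring generated by $1$ and the $n^{2}$ entries of $M$. Then $R_{0}$ still satisfies $x^{3}=x$; applied to $2=1+1$ this gives $8=2$, i.e.\ $6=0$ in $R_{0}$, so $R_{0}$ is a finitely generated $\mathbb Z/6\mathbb Z$-algebra whose generators are integral over $\mathbb Z/6\mathbb Z$ (each is a root of $t^{3}-t$). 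Hence $R_{0}$ is module-finite over $\mathbb Z/6\mathbb Z$, and therefore finite. Moreover $x^{3}=x$ forces $R_{0}$ to be reduced, so $R_{0}$ is a finite product of finite fields, each of which satisfies $x^{3}=x$ and thus has order $2$ or $3$; so $R_{0}\cong\mathbb Z_{2}^{a}\times\mathbb Z_{3}^{b}$ for some $a,b\ge 0$. Then $\mathbb M_{n}(R_{0})\cong\mathbb M_{n}(\mathbb Z_{2})^{a}\times\mathbb M_{n}(\mathbb Z_{3})^{b}$, and since idempotents in a finite product ring are precisely the tuples of idempotents, applying \cite{de} in each coordinate expresses the image of $M$ as a sum of three idempotents in $\mathbb M_{n}(R_{0})\subseteq\mathbb M_{n}(R)$; these three matrices are idempotent in $\mathbb M_{n}(R)$ as well, giving $(1)$.

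For $(2)\Leftrightarrow(3)$: the direction $(2)\Rightarrow(3)$ is immediate, since $x^{2}=x$ in a Boolean ring and $x^{3}=x$ in $\mathbb Z_{3}$ hence in any subdirect product of $\mathbb Z_{3}$'s, so $x^{3}=x$ holds in $A\times B$. For $(3)\Rightarrow(2)$, from $6=0$, $2R+3R=R$ and $2R\cap 3R=6R=0$ the Chinese Remainder Theorem yields $R\cong R/2R\times R/3R=:A\times B$, and both factors, being direct summands of the reduced ring $R$, are reduced. In $A$ (of characteristic dividing $2$) the identity $x^{3}=x$ gives $(x^{2}-x)^{2}=x^{4}+x^{2}=x\cdot x^{3}+x^{2}=x^{2}+x^{2}=0$, so reducedness forces $x^{2}=x$ and $A$ is Boolean. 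In $B$ (of characteristic dividing $3$), if $B\ne 0$ then reducedness embeds $B$ subdirectly into the product of its domain quotients $B/\mathfrak p$; each such quotient is a domain satisfying $x^{3}=x$, hence a field of order at most $3$ and characteristic $3$, i.e.\ $B/\mathfrak p\cong\mathbb Z_{3}$. So $B$ is zero or a subdirect product of $\mathbb Z_{3}$'s.

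The load-bearing step is $(3)\Rightarrow(1)$, and there the only external input beyond bookkeeping is de Seguins Pazzis's theorem; the new observation that makes it work is that $x^{3}=x$ collapses every finitely generated subring of $R$ to a finite product of copies of $\mathbb Z_{2}$ and $\mathbb Z_{3}$, which reduces the matrix decomposition to those two base fields. Everything else — the passage through $J(R)$, the Chinese Remainder splitting, and the checking of $x^{3}=x$ in products and subdirect products — is routine.
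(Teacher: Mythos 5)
Your proposal is correct, and for the key implication $(3)\Rightarrow(1)$ it takes a genuinely different route from the paper. The paper's proof passes through its Lemma~3.3: a Zorn's-lemma argument showing that the sum-of-three-idempotents property for $\mathbb M_n(R)$ can be tested on all \emph{indecomposable factor rings} $R/I$, combined with the observation that an indecomposable factor ring of a ring with $x^3=x$ is a field, hence $\mathbb Z_2$ or $\mathbb Z_3$, at which point Theorem~2.6 (i.e.\ de Seguins Pazzis) finishes. You instead go \emph{downward} to the finitely generated subring $R_0$ generated by $1$ and the entries of the given matrix: the identity $x^3=x$ forces $6=0$ and integrality of the generators, so $R_0$ is finite, reduced, and therefore a finite product of copies of $\mathbb Z_2$ and $\mathbb Z_3$; the decomposition obtained coordinatewise in $\mathbb M_n(R_0)$ is then already a decomposition in $\mathbb M_n(R)$ because idempotency is preserved under the subring inclusion. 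Your argument is constructive in spirit and avoids Zorn's lemma entirely, which is a genuine simplification for this particular theorem; the cost is that it exploits the identity $x^3=x$ directly, whereas the paper's Lemma~3.3 is a reusable reduction that the authors also need for Proposition~3.6 and Theorem~4.3, where no such identity is available a priori. Your verification of $(2)\Leftrightarrow(3)$ (which the paper dismisses as ``easily seen'') via the Chinese Remainder splitting $R\cong R/2R\times R/3R$ and reducedness of the factors is accurate, and $(1)\Rightarrow(3)$ via Theorem~3.2 matches the paper exactly.
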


As an application of Theorems 1.1-1.3, one can determine commutative rings over which every matrix is a sum of three involutions.  
\begin{thm}
Let $R$ be a commutative ring and $n\ge 1$. The following are equivalent:
\begin{enumerate}
\item  Every matrix in $\mathbb M_n(R)$ is a sum of three involutive matrices.
\item $R$ is a subdirect product of $\mathbb Z_3$'s.
\end{enumerate}
\end{thm}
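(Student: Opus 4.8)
I would prove (2)$\Rightarrow$(1) directly from Theorem~1.3 and (1)$\Rightarrow$(2) in three stages, the idea of the latter being to convert the ``sum of three involutions'' hypothesis into the ``sum of three idempotents'' hypothesis so that Theorems~1.2 and~1.3 can be applied. For (2)$\Rightarrow$(1): a subdirect product $R$ of copies of $\mathbb Z_3$ is reduced, has $3=0$, and satisfies $x^3=x$, so by Theorem~1.3 ((3)$\Rightarrow$(1)) every matrix in $\mathbb M_n(R)$ is a sum of three idempotents. Since $3=0$ in $R$, any idempotent $e\in\mathbb M_n(R)$ yields an involution $e+I_n$, because $(e+I_n)^2=e^2+2e+I_n=3e+I_n=I_n$, and $e_1+e_2+e_3=(e_1+I_n)+(e_2+I_n)+(e_3+I_n)$ as $3I_n=0$; hence every sum of three idempotent matrices is a sum of three involutive matrices, which gives (1).

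For (1)$\Rightarrow$(2), \emph{Stage 1} is to show $2\in R^{\times}$. If not, pick a maximal ideal $\mathfrak m\ni 2$ and set $F=R/\mathfrak m$, a field of characteristic $2$; since $\mathbb M_n(F)$ is a homomorphic image of $\mathbb M_n(R)$, every matrix over $F$ is a sum of three involutions. But over such a field an involution $v$ satisfies $(v-I_n)^2=v^2-2v+I_n=0$ (using $v^2=I_n$ and $2=0$), so $v-I_n$ is nilpotent and $\operatorname{tr}(v)=\operatorname{tr}(I_n)=n\cdot 1_F$; thus every sum of three involutions over $F$ has trace $3n\cdot 1_F=n\cdot 1_F$, and no matrix of a different trace can be one --- impossible since $|F|\ge 2$. \emph{Stage 2} uses the ensuing dictionary: $v\mapsto 2^{-1}(v+I_n)$ and $e\mapsto 2e-I_n$ are mutually inverse bijections between involutions and idempotents of $\mathbb M_n(R)$, and for an arbitrary $B$, writing $2B-3I_n=v_1+v_2+v_3$ gives $B=\sum_i 2^{-1}(v_i+I_n)$, a sum of three idempotents. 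So every matrix in $\mathbb M_n(R)$ is a sum of three idempotents; by Theorem~1.2 $J(R)$ is nil and $R/J(R)$ satisfies $x^3=x$ (in particular $\operatorname{Nil}(R/J(R))=0$), whence by Theorem~1.3 $R/J(R)\cong A\times B$ with $A$ Boolean and $B$ a subdirect product of $\mathbb Z_3$'s. Since $2=0$ in every Boolean ring while $2$ is a unit of $R/J(R)$, we must have $A=0$, so $R/J(R)$ is a subdirect product of copies of $\mathbb Z_3$.

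\emph{Stage 3} is to prove $J(R)=0$; as $J(R)$ is nil and $\operatorname{Nil}(R)\subseteq J(R)$ in any commutative ring, this is the same as $\operatorname{Nil}(R)=0$. Suppose $a\ne 0$ with $a^2=0$; by Zorn's lemma pick an ideal $I$ maximal with respect to $a\notin I$. Then $\bar R=R/I$ is subdirectly irreducible, hence indecomposable, its image $\bar a$ is nonzero with $\bar a^2=0$, and every matrix over $\bar R$ is still a sum of three idempotents. By the indecomposable case of Theorem~1.2, $\bar R\cong\mathbb Z_2$, $\mathbb Z_3$, or $\mathbb Z_4$; only $\mathbb Z_4$ has a nonzero square-zero element, so $\bar R\cong\mathbb Z_4$ --- but $2$ is not a unit of $\mathbb Z_4$, contradicting Stage~1. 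Hence $\operatorname{Nil}(R)=0$, so $J(R)=0$ and $R=R/J(R)$ is a subdirect product of $\mathbb Z_3$'s.

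I expect Stage~3 to be the main obstacle: Theorems~1.1--1.3 are built for fields or for reduced, indecomposable rings, so the genuine work is to push the hypothesis on $\mathbb M_n(R)$ down to a quotient where the classification bites and to see the excluded case $\mathbb Z_4$ collide with the unit $2$. Stage~1's trace computation is short but indispensable, since without ``$2$ invertible'' the idempotent--involution correspondence of Stage~2 breaks down and $\mathbb Z_2$- and $\mathbb Z_4$-type behaviour is no longer excluded.
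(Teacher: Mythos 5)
Your proof is correct, and while its skeleton (prove $2$ is invertible, convert involutions to idempotents, invoke the idempotent classification) matches the paper's, two of your sub-arguments are genuinely different. First, to rule out residue fields of characteristic $2$, the paper analyses the rational canonical form of an involutive matrix over such a field (each companion block is $1\times 1$ or the $2\times 2$ swap matrix) and computes the trace blockwise; you instead observe that $(v-I_n)^2=v^2-2v+I_n=0$, so every involution is unipotent and has trace $n\cdot 1_F$ --- a one-line replacement yielding the same trace obstruction. Second, the paper classifies every indecomposable factor ring of $R$ directly (each must be $\mathbb Z_3$, since $\mathbb Z_2$ and $\mathbb Z_4$ are excluded once $2$ is a unit) and then invokes Birkhoff's theorem; you instead first pin down $R/J(R)$ via Theorems 1.2--1.3 and then kill $J(R)$ by passing to a subdirectly irreducible quotient in which a nonzero square-zero element survives, so that the only candidate $\mathbb Z_4$ collides with the invertibility of $2$. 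Both routes ultimately lean on the same indecomposable case of Theorem 1.2; yours trades the paper's final Birkhoff step for the Zorn's-lemma construction of a subdirectly irreducible quotient. The only micro-gaps are cosmetic: in Stage 3 you should note that a nonzero nilradical contains a nonzero element of square zero (replace a nilpotent $b$ of index $k\ge 2$ by $b^{k-1}$), and in Stage 1 that a nilpotent matrix over a field has trace zero; neither affects correctness.
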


\medskip 
Throughout, rings $R$ are associative with $1$.  
An element $a$ of a ring is an idempotent if $a^2=a$, and is an involution or an involutive element if $a^2=1$.
For a ring $R$,  the Jacobson radical, the set of units, the set of nilpotents, the set of idempotents, and the set of involutions of a ring $R$ are denoted by $J(R)$, $U(R)$, ${\rm Nil}(R)$, ${\rm idem}(R)$ and ${\rm invo}(R)$, respectively. As usual,  ${\mathbb M}_n(R)$ stands for the $n\times n$ matrix ring over $R$ whose identity is denoted by $I_n$.  For a matrix $A$, the trace and rank of $A$ are denoted by ${\rm tr}(A)$ and ${\rm rank}(A)$, respectively. For a positive integer $n$, we write $\mathbb Z_n$ for the ring of integers modulo $n$.

\section{Matrices over a field}

\begin{lem}\label{lem2.1}
If $-1$ is a sum of three idempotents in a ring $R$, then $2^2\cdot 3\cdot 5=0$ in $R$.
\end{lem}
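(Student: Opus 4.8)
The plan is to show $60\cdot 1_R=0$ (which is what $2^2\cdot 3\cdot 5=0$ asserts) by working formally inside the subring of $R$ generated by three idempotents $e_1,e_2,e_3$ with $e_1+e_2+e_3=-1$, and extracting the relation through a short chain of left/right multiplications by the $e_i$. The hypothesis is used only once. Writing $e_3=-1-e_1-e_2$ and expanding $e_3^2=e_3$ with the help of $e_i^2=e_i$, one obtains
\[
e_1e_2+e_2e_1=-2-4e_1-4e_2,
\]
and, since the hypothesis is symmetric in $e_1,e_2,e_3$, the two analogous identities for the pairs $\{e_1,e_3\}$ and $\{e_2,e_3\}$. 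Everything after this is bookkeeping.

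First I would multiply the displayed identity on the left, then on the right, by $e_1$, obtaining $e_1e_2e_1=-6e_1-5e_1e_2$ and $e_1e_2e_1=-6e_1-5e_2e_1$. Multiplying the first of these once more on the right by $e_1$ gives $e_1e_2e_1=-6e_1-5e_1e_2e_1$, hence $6e_1e_2e_1=-6e_1$. Substituting this back into those two identities (after multiplying them by $6$) yields $30e_1e_2=-30e_1$ and $30e_2e_1=-30e_1$. Repeating the same computation with $e_1$ and $e_2$ interchanged gives $30e_1e_2=-30e_2$ as well, so $30e_1=30e_2$; by the permutation symmetry of the hypothesis, $30e_1=30e_2=30e_3=:u$, and also $3u=30(e_1+e_2+e_3)=-30\cdot 1$.

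The decisive step is to compute $ue_1$ in two ways: on one hand $ue_1=30e_1^2=30e_1=u$, and on the other hand, using $u=30e_2$, $ue_1=30(e_2e_1)=-30e_1=-u$. Hence $2u=0$. Combined with $3u=-30\cdot 1$ this gives $u=3u-2u=-30\cdot 1$, so $60\cdot 1_R=-2u=0$, as required. (Alternatively: $2u=0$ says $60e_1=60e_2=60e_3=0$, and summing these against $e_1+e_2+e_3=-1$ gives $60\cdot 1_R=0$.)

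The one point requiring care — and, I think, the real content of the lemma — is that last maneuver. A looser combination of the intermediate relations only produces the weaker conclusion $120\cdot 1=0$; the extra factor of $2$ comes precisely from noticing that the single element $u=30e_1=30e_2$ can be multiplied by $e_1$ in two genuinely different ways, forcing $u=-u$. No trace, rank, or matrix-theoretic input is involved, and the statement holds for an arbitrary (possibly noncommutative) ring.
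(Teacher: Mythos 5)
Your proof is correct and follows essentially the same route as the paper's: both extract the key identity $e_ie_j+e_je_i=-2-4e_i-4e_j$ from expanding the square of one summand, then multiply repeatedly by the idempotents to force $60e_i=0$ and hence $60\cdot 1_R=0$. The only difference is cosmetic bookkeeping in the final step (the paper derives $5fg=5gf$ and then $60fg=0$, while you derive $30e_1=30e_2$ and compute $ue_1$ two ways), so no further comment is needed.
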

\begin{proof}
Write $-1=e+f+g$ where $e,f,g$ are idempotents in $R$. Then $1+3e=(-1-e)^2=(f+g)^2=f+g+fg+gf=(-1-e)+fg+gf$, so
\begin{equation*}
2+4e=fg+gf.
\end{equation*}
It follows that $2+4(-1-f-g)=fg+gf$. That is, 
\begin{equation*}
-2-4f-4g=fg+gf.
\end{equation*}
So $f(-2-4f-4g)=f(fg+gf)$, i.e., $-6f-5fg=fgf$. Moreover, $(-2-4f-4g)f=(fg+gf)f$, i.e., $-6f-5gf=fgf$. It follows that
\begin{equation*}
5fg=5gf.
\end{equation*}
Thus, $5(-6f-5fg)=5fgf=f(5gf)=f(5fg)=5fg$. So $-30f=30fg$, and hence $-30fg=30fg$, i.e., $60fg=0$. It follows that $60f=0$. Similarly, $60e=0$ and $60g=0$. So $60=-60(e+f+g)=0$.
\end{proof}

\begin{lem}\label{lem2.2}
Let $n\ge 1$ and $F$ be a field. If  every invertible matrix in ${\mathbb M}_n(F)$ is a sum of three idempotents, then $F\cong \mathbb Z_p$ where $p=2,3$ or $5$.
\end{lem}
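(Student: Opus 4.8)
The plan is to reduce the ``every invertible matrix is a sum of three idempotents'' hypothesis to arithmetic constraints on $F$. The first reduction is to observe that $-I_n$ is an invertible matrix in $\mathbb M_n(F)$, hence a sum of three idempotent matrices; applying $\lemref{lem2.1}$ to the ring $\mathbb M_n(F)$ (or, after diagonalizing, noting that $-1$ must be a sum of three idempotents in $F$ itself, using that an idempotent matrix over a field has trace equal to its rank and taking traces does not immediately give this — so it is cleaner to apply $\lemref{lem2.1}$ directly to $\mathbb M_n(F)$) we get $60\cdot I_n=0$, i.e. $\mathrm{char}(F)\mid 60$. Since $F$ is a field, $\mathrm{char}(F)$ is $0$ or a prime, so $\mathrm{char}(F)\in\{2,3,5\}$. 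In particular $F$ has positive characteristic $p\in\{2,3,5\}$, and what remains is to show that $F$ cannot be a proper extension of $\mathbb Z_p$, i.e. $|F|=p$.

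The main work is therefore to rule out $|F|>p$. I would argue by contradiction: suppose $|F|\ge p^2$ (or just $|F|\ge 4$). The key is to find a single invertible matrix that cannot be written as a sum of three idempotents. A natural first case is $n=1$: then the hypothesis says every nonzero element of $F$ is a sum of three elements of $\{0,1\}$, so $U(F)\subseteq\{1,2,3\}$, forcing $|F|\le 4$ and then a direct check kills $|F|=4$ (in $\mathbb F_4$ one has $2=0$, so sums of three idempotents lie in $\{0,1\}$, but $U(\mathbb F_4)$ has three elements not all equal to $1$). For general $n$, I would use a trace/rank obstruction: if $A=E_1+E_2+E_3$ with each $E_i$ idempotent over a field, then $\mathrm{tr}(E_i)=\mathrm{rank}(E_i)\in\{0,1,\dots,n\}$ is an integer (viewed in the prime subfield), so $\mathrm{tr}(A)$ lies in the image of $\{0,1,\dots,3n\}$ in $F$. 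Choosing an invertible $A$ — for instance a companion matrix or a scalar-like matrix — whose trace can be made to be an arbitrary element of $F$, and noting that for $|F|>p$ not every element of $F$ lies in the prime subfield, should produce a contradiction. Concretely, take $A=\mathrm{diag}(\lambda,1,\dots,1)$ with $\lambda\in F\setminus\mathbb Z_p$ and $\lambda\neq 0$; then $\mathrm{tr}(A)=\lambda+(n-1)\cdot 1\notin\mathbb Z_p$, while any sum of three idempotent matrices has trace in $\mathbb Z_p$, a contradiction.

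The step I expect to be the genuine obstacle is making the trace argument airtight, specifically verifying that over an arbitrary field the trace of an idempotent matrix really equals its rank as an element of the prime subfield (this is standard: an idempotent is diagonalizable with eigenvalues $0,1$ since $x^2-x$ is separable, so the trace is the number of $1$'s, an integer reduced mod $p$) and confirming that this forces $\mathrm{tr}(E_1+E_2+E_3)\in\mathbb Z_p$. Once that is in hand, the contradiction with $\mathrm{tr}(\mathrm{diag}(\lambda,1,\dots,1))\notin\mathbb Z_p$ is immediate, and together with the characteristic computation from $\lemref{lem2.1}$ we conclude $F\cong\mathbb Z_p$ with $p\in\{2,3,5\}$. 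I would also double-check the small case $n=1$ separately since the diagonalization argument is vacuous there but the direct computation above handles it.
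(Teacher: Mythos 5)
Your proposal is correct and follows essentially the same route as the paper: apply Lemma~\ref{lem2.1} to $-I_n$ in $\mathbb M_n(F)$ to get $\mathrm{char}(F)\in\{2,3,5\}$, then use the fact that idempotent matrices over a field have trace equal to rank (an element of the prime subfield) on $\mathrm{diag}(a,1,\dots,1)$ to force $F=\mathbb Z_p$. The only difference is cosmetic: the paper states the trace argument directly for every $0\neq a\in F$ rather than by contradiction.
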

\begin{proof}
By Lemma \ref{lem2.1}, ${\rm ch}(F)=p$ where $p=2,3$ or $5$. For any $0\not= a\in F$, $A:=\begin{pmatrix}a&0\\
                       0&I_{n-1}\end{pmatrix}$ is invertible in $\mathbb M_n(F)$, so $A=E_1+E_2+E_3$ where $E_1,E_2,E_3$ are idempotent matrices over $F$. Thus, ${\rm tr}(A)={\rm tr}(E_1)+{\rm tr}(E_2)+{\rm tr}(E_3)={\rm rank}(E_1)+{\rm rank}(E_2)+{\rm rank}(E_3)\in \mathbb Z\cdot 1_F=\mathbb Z_p$. It follows that $a\in \mathbb Z_p$. Hence $F\cong \mathbb Z_p$.
\end{proof}

The following known result is a consequence of \cite[Theorem 3]{de1} (this generalizes a classical result of Hartwig and Putcha).

\begin{lem}\label{lem2.3}{\rm (}Symmetry Lemma{\rm )}. Let $F$ be a field and $A\in {\mathbb M}_n(F)$ be a sum of two idempotent matrices. Then, for all $\lambda\in F\backslash \{0, 1_F, 2\cdot 1_F\}$, the scalars $\lambda$ and $2\cdot 1_F-\lambda$ have the same {\rm (}algebraic{\rm )} multiplicity as eigenvalues of $A$.
\end{lem}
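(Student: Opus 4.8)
The plan is to give a short self-contained argument. (The quoted \cite[Theorem 3]{de1} classifies matrices that are sums of two idempotents up to similarity, from which the Symmetry Lemma is immediate; but the direct route below is quick and uses nothing external.) Write $A=E_1+E_2$ with $E_1^2=E_1$ and $E_2^2=E_2$, and set
\[
M:=E_1-E_2,\qquad N:=A-I_n .
\]
The first step is to record the two identities
\[
MN=-NM\qquad\text{and}\qquad M^2+N^2=I_n ,
\]
both of which follow from $E_i^2=E_i$ by a one-line expansion: $MN=E_1E_2-E_2E_1=-NM$, while $M^2=E_1-E_1E_2-E_2E_1+E_2$ and $N^2=E_1E_2+E_2E_1-E_1-E_2+I_n$ add up to $I_n$. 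These two relations are all we shall use; note in particular that no division by $2$ occurs, so the argument is valid in every characteristic.

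Next I would translate the claim into a statement about $N$. For $\mu\in F$ write $V_\mu(N)=\ker(N-\mu I_n)^n$ for the generalized eigenspace, whose dimension is the algebraic multiplicity of $\mu$. Since $N=A-I_n$ we have $V_\mu(N)=V_{1+\mu}(A)$, and the substitution $\lambda=1+\mu$ carries the excluded set $\{0,\,1_F,\,2\cdot 1_F\}$ for $\lambda$ onto $\{-1,0,1\}$ for $\mu$, while $2\cdot 1_F-\lambda$ corresponds to $-\mu$. So it suffices to prove: if $\mu\neq 0,\pm1$ then $\dim V_\mu(N)=\dim V_{-\mu}(N)$ (the case $\lambda=1_F$, i.e. $\mu=0$, being trivial since then $2\cdot1_F-\lambda=\lambda$). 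Now exploit the anticommutation: from $MN=-NM$ one gets $M\,p(N)=p(-N)\,M$ for every polynomial $p\in F[t]$, and taking $p(t)=(t-\mu)^n$ shows that $M$ maps $V_\mu(N)$ into $V_{-\mu}(N)$, and symmetrically $V_{-\mu}(N)$ into $V_\mu(N)$. Consider the composite $M^2\colon V_\mu(N)\to V_\mu(N)$. On $V_\mu(N)$ the operator $N$ equals $\mu I_n$ plus a nilpotent, hence $N^2$ equals $\mu^2 I_n$ plus a nilpotent, and therefore $M^2=I_n-N^2$ equals $(1-\mu^2)I_n$ plus a nilpotent there. When $\mu\neq\pm1$ this is invertible, so $M\colon V_\mu(N)\to V_{-\mu}(N)$ is injective; running the same computation with $-\mu$ shows $M\colon V_{-\mu}(N)\to V_\mu(N)$ is injective as well. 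Hence $\dim V_\mu(N)=\dim V_{-\mu}(N)$, as required. (This in fact proves the statement for all $\lambda\neq 0,\,2\cdot1_F$, slightly more than claimed.)

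The only delicate point is the one just used. Globally $M=E_1-E_2$ need not be invertible (for instance $M=0$ when $E_1=E_2$), so one cannot simply say "$M$ conjugates $N$ to $-N$"; this is precisely why the eigenvalues $\mu=\pm1$, i.e. $\lambda=0$ and $\lambda=2\cdot1_F$, must be excluded. The identity $M^2+N^2=I_n$ is what rescues the argument: it forces $M$ to be invertible \emph{after restriction to each generalized eigenspace $V_\mu(N)$ with $\mu\neq\pm1$}, which are exactly the eigenvalues left in play. Everything else is routine linear algebra over $F$ — no passage to the algebraic closure is needed, since $\lambda$ and $2\cdot1_F-\lambda$ already lie in $F$.
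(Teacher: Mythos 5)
Your proof is correct. Note that the paper does not actually prove Lemma~\ref{lem2.3} at all: it simply quotes it as a consequence of \cite[Theorem 3]{de1}, which is a full similarity classification of sums (in fact linear combinations) of two idempotents and is considerably heavier machinery. Your argument replaces that citation with a short self-contained proof in the Hartwig--Putcha spirit: the two identities $MN=-NM$ and $M^2+N^2=I_n$ for $M=E_1-E_2$, $N=A-I_n$ both check out, the anticommutation gives $M\,p(N)=p(-N)\,M$ and hence $M\colon V_\mu(N)\to V_{-\mu}(N)$, and the identity $M^2=I_n-N^2=(1-\mu^2)I_n+(\text{nilpotent})$ on $V_\mu(N)$ correctly forces injectivity of the restricted $M$ precisely when $\mu\neq\pm 1$, i.e.\ $\lambda\neq 0,2\cdot 1_F$ --- which is exactly where the exclusions in the statement come from. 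The identification of $\dim\ker(N-\mu I_n)^n$ with the algebraic multiplicity is valid over an arbitrary field, and your bookkeeping of the change of variables $\lambda=1+\mu$, $2\cdot 1_F-\lambda\leftrightarrow-\mu$ is right (the case $\lambda=1_F$ being vacuous). What your route buys is independence from the external classification theorem and uniform validity in every characteristic (including characteristic $2$, where the statement degenerates to a triviality); what the citation buys the authors is brevity and a pointer to the stronger structural result. Either is acceptable; yours is a genuine, complete alternative proof.
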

 
In \cite[p.861]{de}, it was claimed that, for a prime $p\ge 5$, the matrix $(p-1)\cdot I_n$ in $\mathbb M_n(\mathbb Z_p)$ is not a sum of three idempotents.  But, this claim is false:  
If $n=2k$ and $p=5$, then $(p-1)I_n=-I_n=\begin{pmatrix}I_k&0\\
                       0&0\end{pmatrix}+\begin{pmatrix}-I_k&I_k\\
                       -2I_k&2I_k\end{pmatrix}+\begin{pmatrix}-I_k&-I_k\\
                       2I_k&2I_k\end{pmatrix}$ is a sum of three idempotents in $\mathbb M_n(\mathbb Z_p)$.
Indeed we have the following result.											
 
\begin{lem}\label{lem2.4}
Let $p$ be a prime number greater
than $3$, and $n$ be a positive integer. If $-I_n$ is a sum of three idempotent matrices of $\mathbb M_n(\mathbb Z_p)$, then $p = 5$ and $n$ is even. 
\end{lem}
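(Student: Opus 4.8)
The plan is to pin down $p$ using \lemref{lem2.1} and then read off the parity of $n$ from the Symmetry Lemma. Since $-I_n$ is a sum of three idempotents of the ring $\mathbb{M}_n(\mathbb{Z}_p)$, I would first apply \lemref{lem2.1} with $R=\mathbb{M}_n(\mathbb{Z}_p)$ to conclude that $2^2\cdot 3\cdot 5\cdot I_n=0$, i.e.\ $p\mid 60$. As $p$ is a prime with $p>3$, this forces $p=5$, so from here on everything takes place over $\mathbb{Z}_5$.

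Next I would write $-I_n=E_1+E_2+E_3$ with the $E_i$ idempotent and set $A:=E_2+E_3=-I_n-E_1$; then $A$ is a sum of two idempotent matrices over $\mathbb{Z}_5$, so \lemref{lem2.3} applies to it. The key computation is the eigenvalue multiplicities of $A$: putting $r:=\mathrm{rank}(E_1)$, the matrix $E_1$ (being idempotent over a field) is diagonalizable with eigenvalue $1$ of multiplicity $r$ and eigenvalue $0$ of multiplicity $n-r$, so $A=-I_n-E_1$ has characteristic polynomial $(x+1)^{\,n-r}(x+2)^{\,r}$; its only eigenvalues are thus $-1$ and $-2$, with algebraic multiplicities $n-r$ and $r$ respectively. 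Now I would invoke \lemref{lem2.3} for $A$ with $\lambda=-1=4\in\mathbb{Z}_5$: since $p=5$ we have $\lambda\notin\{0,1,2\}$ and $2\cdot 1_F-\lambda=-2=3\notin\{0,1,2\}$, so the Symmetry Lemma yields that $-1$ and $-2$ occur with the same multiplicity as eigenvalues of $A$. Hence $n-r=r$, that is, $n=2r$ is even.

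I do not expect a genuine obstacle; the whole argument is essentially a two-step combination of \lemref{lem2.1} and \lemref{lem2.3}, and the only thing requiring care is checking that the eigenvalues $-1,-2$ of $A$ stay clear of the forbidden set $\{0,1,2\}$ in the Symmetry Lemma. That check is precisely where $p=5$ (as opposed to $p=2$ or $3$) is used: over $\mathbb{Z}_2$ or $\mathbb{Z}_3$ the residues $-1,-2$ collide with elements of $\{0,1,2\}$ and the symmetry conclusion degenerates — which is exactly why those primes, already excluded by hypothesis, cannot be treated this way.
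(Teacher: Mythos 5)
Your proof is correct and follows essentially the same route as the paper: both set $A=-I_n-E_1$, note that $A$ is a sum of two idempotents and is diagonalizable with eigenvalues $-1,-2$ of multiplicities $n-r$ and $r$, and apply the Symmetry Lemma to force $n-r=r$, so $n$ is even. The only (harmless) divergence is how $p=5$ is obtained: you apply \lemref{lem2.1} to the ring $\mathbb M_n(\mathbb Z_p)$ to get $p\mid 60$, whereas the paper extracts $p=5$ from the Symmetry Lemma itself (if $-1$ is an eigenvalue of $A$ then so is $2-(-1)=3$, which must equal $-1$ or $-2$ in $\mathbb Z_p$, forcing $p=5$); both are valid one-line arguments.
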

\begin{proof}
Assume that $-I_n = E_1 + E_2 + E_3$ for some idempotents $E_1, E_2, E_3$. Then
$A := -I_n - E_1$ is the sum of two idempotents. On the other hand, $A$ is diagonalizable with
eigenvalues in $\{-\bar 2,-\bar 1\}$. None of those eigenvalues belongs to $\{\bar 0, \bar 1, \bar 2\}$, whence for each such eigenvalue $\lambda$, by Lemma \ref{lem2.3} we find that $\bar 2 -\lambda$ is also an eigenvalue of $A$. It easily follows that $p = 5$. Next, assume that $n$ is odd. Then, we see that the mapping $\lambda\mapsto  \bar 2 - \lambda$ exchanges the two elements of $\{-\bar 2,-\bar 1\}$ whence $-\bar 2$ and $-\bar 1$ have the same multiplicity as eigenvalues of $A$, and we conclude that $n$ is even.
\end{proof}

\begin{lem}\label{lem2.5} Let $n = 2m$ be a positive even integer. Then 
$\begin{pmatrix}1&0\\
                       0&-I_{n-1}\end{pmatrix}$
is not the sum of three idempotents of $\mathbb M_n(\mathbb Z_5)$.
\end{lem}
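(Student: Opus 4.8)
The plan is to argue by contradiction: suppose $A:=\begin{pmatrix}1&0\\0&-I_{n-1}\end{pmatrix}=E_1+E_2+E_3$ with each $E_i$ an idempotent of $\mathbb M_n(\mathbb Z_5)$. I will first show the hypothesis forces ${\rm rank}(E_1)={\rm rank}(E_2)={\rm rank}(E_3)=m$, and then derive a contradiction from the trace: since the trace of a rank-$m$ idempotent over $\mathbb Z_5$ equals $\bar m$, one would get ${\rm tr}(A)=\bar 3\bar m$, whereas directly ${\rm tr}(A)=\bar 1+(n-1)\bar 4=\overline{2-2m}$; but $\bar 3\bar m=\overline{2-2m}$ is equivalent to $\overline{5m}=\bar 2$, i.e.\ $\bar 0=\bar 2$ in $\mathbb Z_5$, which is absurd.

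For the rank statement, fix $i$ and set $r:={\rm rank}(E_i)$. The key observation is that $A-E_i$ is a sum of two idempotents, so \lemref{lem2.3} applies to it over $\mathbb Z_5$ with $\lambda=\bar 3$ (note $\bar 3\notin\{\bar 0,\bar 1,\bar 2\}$ and $\bar 2-\bar 3=\bar 4$): the eigenvalues $\bar 3$ and $\bar 4$ then have the same algebraic multiplicity in $A-E_i$. To exploit this I would pass to a basis in which $E_i={\rm diag}(I_r,0_{n-r})$; there $A=-I_n+N$ with ${\rm rank}(N)=1$ (since $A$ is similar to $-I_n+{\rm diag}(\bar 2,0,\dots,0)$), so $A-E_i=D+N$ is a rank-one perturbation of $D:={\rm diag}(\bar 3\,I_r,\bar 4\,I_{n-r})$. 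Writing $N=uv^{t}$ with $v^{t}u=\bar 2$ and using the matrix determinant lemma, the characteristic polynomial of $A-E_i$ (for $0<r<n$) comes out as $(x-\bar 3)^{r-1}(x-\bar 4)^{n-r-1}q(x)$ for a monic quadratic $q$ with $q(\bar 3)$, $q(\bar 4)$ equal to ``top'' and ``bottom'' partial sums of the entries of $N$ (whose total is $\bar 2$). One then checks: in the generic case ($\bar 3,\bar 4$ not roots of $q$) the multiplicities of $\bar 3,\bar 4$ in $A-E_i$ are $r-1$ and $n-r-1$; if $\bar 3$ is a root of $q$ they are $r$ and $n-r-1$; if $\bar 4$ is a root they are $r-1$ and $n-r$. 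Equating the two multiplicities, as forced by \lemref{lem2.3}, yields $n=2r$, $n=2r+1$, or $n=2r-1$ respectively; since $n=2m$ is even only $n=2r$ survives, giving $r=m$. The boundary cases $r=0$ and $r=n$ are ruled out at once: there $A-E_i$ is similar to ${\rm diag}(\bar 1,\bar 4,\dots,\bar 4)$ or ${\rm diag}(\bar 0,\bar 3,\dots,\bar 3)$, whose eigenvalues $\bar 3,\bar 4$ plainly have unequal multiplicities (as $n\ge 2$), contradicting \lemref{lem2.3}.

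Having obtained ${\rm rank}(E_i)=m$ for $i=1,2,3$, the trace computation of the first paragraph completes the proof. I expect the only real obstacle to be the bookkeeping in the rank-one perturbation step — specifically, confirming that the two exceptional subcases where $\bar 3$ or $\bar 4$ is a root of $q$ cannot occur, which works out because each of them forces $n$ to be odd. The remaining ingredients (the matrix determinant lemma computation and the arithmetic in $\mathbb Z_5$) are routine.
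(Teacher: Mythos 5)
Your argument is correct. It rests on the same two pillars as the paper's proof — the Symmetry Lemma (\lemref{lem2.3}) applied to $A-E_i=E_j+E_k$, and counting traces of idempotents via their ranks — but the middle step is executed differently, and the logical shape of the proof changes as a result. The paper does \emph{not} prove that each $E_i$ has rank $m$; it splits into two cases: if all three ranks equal $m$ the trace identity $3\bar m=2-\bar n$ fails, and otherwise some $E_1$ has an eigenvalue of multiplicity $r>m$, whereupon Grassmann's formula applied to $\operatorname{Ker}(A+I_n)\cap\operatorname{Ker}(E_1-\alpha I_n)$ gives a geometric multiplicity bound for $M=A-E_1$, the Symmetry Lemma forces $r=m+1$ and pins the characteristic polynomial of $M$ down to $(x+\bar 1)^m(x+\bar 2)^m$, and a second trace computation yields the contradiction. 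You instead compute the characteristic polynomial of $A-E_i$ exactly, as a rank-one perturbation of $\operatorname{diag}(\bar 3 I_r,\bar 4 I_{n-r})$ via the matrix determinant lemma, and show that the equality of the multiplicities of $\bar 3$ and $\bar 4$ forces $n=2r$ in every subcase (the two exceptional subcases where $q$ vanishes at $\bar 3$ or $\bar 4$ cannot both occur since the partial sums add to $\bar 2\neq\bar 0$, and each forces $n$ odd); this establishes the stronger intermediate claim that every $E_i$ has rank exactly $m$, after which only the first trace computation is needed. Your route is more computational but entirely self-contained and arguably more transparent about where the parity of $n$ enters; the paper's Grassmann-formula argument is shorter but needs the extra case analysis at the end. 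Both are valid; the only cosmetic imprecision in yours is the sign in the identification of $q(\bar 4)$ with the ``bottom'' partial sum, which does not affect the vanishing criterion you actually use.
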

\begin{proof}
Set $A :=\begin{pmatrix}1&0\\
                       0&-I_{n-1}\end{pmatrix}$  and assume that $A = E_1 +E_2 +E_3$ for some idempotent matrices $E_1, E_2, E_3$ in $\mathbb M_n(\mathbb Z_5)$. If the $E_i$'s all had rank $m$, then 
\begin{equation*}
{\rm tr}(A) = 3\overline m = -2\overline m = -\overline n,
\end{equation*}
which is false. Hence, one of the $E_i$'s, say $E_1$, has an eigenvalue $\alpha$ with multiplicity $r$ greater than $m$. By Grassmann’s formula, one finds that $Ker(A + I_n) \cap Ker(E_1 -\alpha I_n)$ has dimension at least
$r-1$, and it follows that the geometric multiplicity of $-1 -\alpha$ as an eigenvalue of $M := A -E_1$ is at least $r -1$. Using the Symmetry Lemma, we deduce that $2(r-1) \le n$, whence $r = m + 1$
and $M$ has characteristic polynomial $(x + \bar 1)^m(x + \bar 2)^m$. Therefore, for some $\epsilon \in \{-1, 1\}$,
\begin{equation*}
{\rm tr}(A) = {\rm tr} (E_1) + {\rm tr}(M) = (m + \epsilon)\bar 1 -m\bar 1 - m\bar 2 = \overline{\epsilon}- \overline {n},
\end{equation*}
which contradicts the fact that ${\rm tr}(A) = -\overline{n} + \overline{2}$.											
\end{proof}
Note that with the same method, one can prove that if $n$ is even then the matrix $\begin{pmatrix}2&0\\
                       0&-I_{n-1}\end{pmatrix}$ is not the sum of three idempotent matrices of $\mathbb M_n(\mathbb Z_5)$.
Here is the main result in this section.

\begin{thm}\label{thm2.6}
Let $F$ be a field and $n\ge 1$. The following are equivalent:
\begin{enumerate}
\item Every matrix in $\mathbb M_n(F)$ is a sum of three idempotents.
\item Every invertible matrix in $\mathbb M_n(F)$ is a sum of three idempotents.
\item  $F\cong \mathbb Z_2$ or $F\cong \mathbb Z_3$.
\end{enumerate}
\end{thm}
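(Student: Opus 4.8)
The plan is to establish the cycle $(1)\Rightarrow(2)\Rightarrow(3)\Rightarrow(1)$. The implication $(1)\Rightarrow(2)$ is trivial, as invertible matrices are in particular matrices. The implication $(3)\Rightarrow(1)$ is precisely de Seguins Pazzis's theorem \cite{de}: when $|F|\le 3$---equivalently, $F\cong\mathbb Z_2$ or $F\cong\mathbb Z_3$---every matrix in $\mathbb M_n(F)$ is a sum of three idempotents. So the only implication requiring an argument is $(2)\Rightarrow(3)$, and the substantive work has already been isolated into \lemref{lem2.1}--\lemref{lem2.5}; what remains is to assemble it.

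For $(2)\Rightarrow(3)$, assume every invertible matrix in $\mathbb M_n(F)$ is a sum of three idempotents. By \lemref{lem2.2}, $F$ must be isomorphic to $\mathbb Z_2$, $\mathbb Z_3$, or $\mathbb Z_5$, so it suffices to rule out $F\cong\mathbb Z_5$. I would do this by contradiction, splitting on the parity of $n$ and exhibiting, in each case, an invertible matrix over $\mathbb Z_5$ that cannot be a sum of three idempotents.

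If $n$ is odd (this includes $n=1$), take the invertible matrix $-I_n\in\mathbb M_n(\mathbb Z_5)$. By hypothesis it is a sum of three idempotent matrices, but \lemref{lem2.4} then forces $n$ to be even---a contradiction. If $n=2m$ is even, take instead $A=\begin{pmatrix}1&0\\0&-I_{n-1}\end{pmatrix}\in\mathbb M_n(\mathbb Z_5)$; its determinant is $\pm\bar 1\ne\bar 0$, so $A$ is invertible, hence a sum of three idempotents by hypothesis, directly contradicting \lemref{lem2.5}. Either way we get a contradiction, so $F\not\cong\mathbb Z_5$, whence $F\cong\mathbb Z_2$ or $F\cong\mathbb Z_3$; this is $(3)$.

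I do not expect a real obstacle at this stage: the genuine difficulty---tracking algebraic and geometric multiplicities of eigenvalues of sums of two idempotents via the Symmetry Lemma (\lemref{lem2.3}), together with the trace and Grassmann-formula computations underlying \lemref{lem2.4} and \lemref{lem2.5}---is already behind us. The one point to handle with care is the degenerate case $n=1$, where the block matrix $\begin{pmatrix}1&0\\0&-I_{n-1}\end{pmatrix}$ is not really meaningful; this is why I route all odd $n$, including $n=1$, through \lemref{lem2.4}, whose conclusion ``$n$ even'' is already contradicted there.
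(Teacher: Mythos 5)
Your proposal is correct and follows exactly the paper's route: $(1)\Rightarrow(2)$ trivially, $(3)\Rightarrow(1)$ by de Seguins Pazzis's result, and $(2)\Rightarrow(3)$ by combining Lemma~\ref{lem2.2} with Lemmas~\ref{lem2.4} and~\ref{lem2.5} to exclude $\mathbb Z_5$ according to the parity of $n$. Your explicit case split (odd $n$ via $-I_n$, even $n$ via $\mathrm{diag}(1,-I_{n-1})$) is just a careful spelling-out of what the paper leaves implicit.
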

\begin{proof}
The implication $(1)\Rightarrow (2)$ is clear. The implication $(2)\Rightarrow (3)$ follows 
from Lemmas \ref{lem2.2}, \ref{lem2.4} and \ref{lem2.5}. The implication $(3)\Rightarrow (1)$ follows from \cite[Proposition 9]{de}.
\end{proof}

A strengthening of Theorem \ref{thm2.6} is given below.

\begin{cor}\label{cor2.7}
Let $R$ be an integral domain and $n\ge 1$.  Then every matrix in $\mathbb M_n(R)$ is a sum of three idempotents if and only if  $R\cong \mathbb Z_2$ or $R\cong \mathbb Z_3$.
\end{cor}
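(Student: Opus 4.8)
The plan is to reduce the statement for an arbitrary integral domain $R$ to the field case handled by Theorem~\ref{thm2.6}. The backward implication is immediate: if $R\cong\mathbb Z_2$ or $R\cong\mathbb Z_3$, then $R$ is a field and Theorem~\ref{thm2.6} (specifically the implication $(3)\Rightarrow(1)$, which rests on \cite[Proposition 9]{de}) applies verbatim. So the content is in the forward direction. Assume every matrix in $\mathbb M_n(R)$ is a sum of three idempotents, $R$ an integral domain.

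First I would locate the characteristic. By Lemma~\ref{lem2.1}, since $-1\in R$ is (a scalar matrix, hence, looking at the $(1,1)$ entry after restricting to diagonal idempotents — or more safely) a sum of three idempotents in $\mathbb M_n(R)$, we need to get $60=0$ in $R$; but one must be a little careful, because Lemma~\ref{lem2.1} is about ring elements, and $-I_n$ being a sum of three idempotent \emph{matrices} does not instantly give that $-1$ is a sum of three idempotent \emph{scalars}. The clean fix is to pass to the field of fractions. Let $F=\mathrm{Frac}(R)$. The key observation is that an idempotent matrix over $R$ remains idempotent over $F$, so every matrix in $\mathbb M_n(R)$ — and in particular every matrix with entries in $R$ — is a sum of three idempotents in $\mathbb M_n(F)$. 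However, that only covers matrices with entries in $R$, not all of $\mathbb M_n(F)$, so Theorem~\ref{thm2.6} does not apply directly to $F$. Instead I would run the proofs of Lemmas~\ref{lem2.2}, \ref{lem2.4}, \ref{lem2.5} with the matrices exhibited there, all of which already have entries in $\{0,\pm1,\pm2\}\subseteq R$: the matrix $A=\mathrm{diag}(a,I_{n-1})$ used in Lemma~\ref{lem2.2} needs $a\in R$, which is exactly what we have; the matrices $-I_n$, $\mathrm{diag}(1,-I_{n-1})$ of Lemmas~\ref{lem2.4} and \ref{lem2.5} have integer entries. Thus the trace/rank argument of Lemma~\ref{lem2.2} shows every $a\in R$ lies in the prime subring $\mathbb Z\cdot 1_F$, forcing $\mathrm{ch}(F)=\mathrm{ch}(R)$ to be a prime $p$ (it is nonzero by Lemma~\ref{lem2.1} applied inside $F$ to the scalar $-1$, which \emph{is} a sum of three scalar idempotents over $F$ once we know $R$, hence $F$, has the right small characteristic — alternatively, $0=60\cdot 1_F$ kills characteristic $0$) and $R=\mathbb Z\cdot 1_F\cong\mathbb Z_p$ with $p\in\{2,3,5\}$.

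Next I would eliminate $p=5$. With $R=\mathbb Z_5$ we are back exactly in the situation of Lemmas~\ref{lem2.4} and \ref{lem2.5}: if $n$ is odd, $-I_n$ cannot be a sum of three idempotents in $\mathbb M_n(\mathbb Z_5)$ by Lemma~\ref{lem2.4}, a contradiction; if $n$ is even, the matrix $\mathrm{diag}(1,-I_{n-1})$ (whose entries lie in $\mathbb Z_5$) cannot be a sum of three idempotents by Lemma~\ref{lem2.5}, again a contradiction. Hence $p\in\{2,3\}$ and $R\cong\mathbb Z_2$ or $R\cong\mathbb Z_3$, completing the proof.

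I expect the main obstacle to be the bookkeeping in the reduction: making precise that although $\mathbb M_n(F)$ is \emph{not} known to have the three-idempotents property, the \emph{specific} test matrices appearing in the lemmas of this section already have entries in the prime ring $\mathbb Z\cdot 1_R\subseteq R$, so the conclusions of those lemmas about their non-representability (over $\mathbb Z_5$) transfer back to $\mathbb M_n(R)$. Once that is granted, nothing new is needed beyond Theorem~\ref{thm2.6} and its supporting lemmas; the argument is essentially a matter of re-reading Section~2 with $R$ in place of $F$ and observing that every matrix that gets used has the right entries.
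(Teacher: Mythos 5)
Your proposal is correct and follows essentially the same route as the paper: pass to the field of fractions to get $\mathrm{tr}(E)=\mathrm{rank}(E)\cdot 1_R$ for idempotent matrices over $R$, deduce $R=\mathbb Z\cdot 1_R$ by taking the trace of a decomposition of a scalar test matrix, use Lemma~\ref{lem2.1} to force $\mathrm{ch}(R)\in\{2,3,5\}$, and finish with Theorem~\ref{thm2.6}. Your one hesitation is unnecessary: Lemma~\ref{lem2.1} is stated for an arbitrary ring, so applying it to the ring $\mathbb M_n(R)$ itself (whose $-1$ is $-I_n$ and whose idempotents are the idempotent matrices) immediately yields $60\, I_n=0$, hence $60=0$ in $R$, with no circularity.
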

\begin{proof} We verify the necessity.  Let $Q$ be the field of quotients of $R$, and let $E^2=E\in \mathbb M_n(R)$.  Then, in ${\mathbb M}_n(Q)$,  $E$ is equivalent to a diagonal matrix. So, by \cite[Theorem 1]{S}, there exists an invertible matrix $U$ in ${\mathbb M}_n(Q)$ such that
$U^{-1}EU=\begin{pmatrix}I_r&0\\
                       0&0\end{pmatrix}$ where $r={\rm rank}(E)$. As the trace is similarity-invariant, we have ${\rm tr}(E)={\rm rank}(E)\cdot 1_Q={\rm rank}(E)\cdot 1_R\in {\mathbb Z}\cdot 1_R$. Now let $a\in R$ and write $\begin{pmatrix}a&0\\
                       0&0\end{pmatrix}=E_1+E_2+E_3$ where each $E_i$ is an idempotent matrix in ${\mathbb M}_n(R)$. Then $a={\rm tr}(E_1+E_2+E_3)={\rm tr}(E_1)+{\rm tr}(E_2)+{\rm tr}(E_3)={\rm rank}(E_1)\cdot 1_R+{\rm rank}(E_2)\cdot 1_R+{\rm rank}(E_3)\cdot 1_R=\big({\rm rank}(E_1)+{\rm rank}(E_2)+{\rm rank}(E_3)\big)\cdot 1_R\in \mathbb Z\cdot 1_R$. So, $R=\mathbb Z\cdot 1_R$. As $R$ is a domain and $2^2\cdot 3\cdot 5=0$ in $R$ (Lemma \ref{lem2.1}), we deduce that ${\rm ch}(R)=p$ where $p=2, 3$ or $5$. It follows that $R\cong \mathbb Z_p$ where $p=2,3$ or $5$. So, by Theorem \ref{thm2.6}, $R\cong \mathbb Z_2$ or $R\cong \mathbb Z_3$.
\end{proof}

For the integral domain $R=\mathbb Z_2[x]$, $U(R)=\{1\}$. Hence every unit of $R$ is a sum of three idempotents, but $R\not\cong \mathbb Z_2$ and $R\not\cong \mathbb Z_3$. However, we have the following.

\begin{cor}\label{cor2.8}
Let $R$ be an integral domain and $n\ge 2$.  Then every invertible matrix in $\mathbb M_n(R)$ is a sum of three idempotents if and only if $R\cong \mathbb Z_2$ or $R\cong \mathbb Z_3$.
\end{cor}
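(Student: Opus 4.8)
The plan is to deduce sufficiency directly from \thmref{thm2.6} and to establish necessity by the trace argument of \corref{cor2.7}, the one new point being to replace the non-invertible matrix $\begin{pmatrix}a&0\\0&0\end{pmatrix}$ by an invertible matrix carrying the same kind of trace. For sufficiency: if $R\cong\mathbb Z_2$ or $R\cong\mathbb Z_3$ then $R$ is a field, and by \thmref{thm2.6} every matrix in $\mathbb M_n(R)$ --- in particular every invertible one --- is a sum of three idempotents.

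For necessity, assume $n\ge 2$ and that every invertible matrix in $\mathbb M_n(R)$ is a sum of three idempotents. First I would record, exactly as in the proof of \corref{cor2.7}, that every idempotent $E\in\mathbb M_n(R)$ satisfies ${\rm tr}(E)={\rm rank}(E)\cdot 1_R\in\mathbb Z\cdot 1_R$: over the quotient field $Q$ of $R$, by \cite[Theorem 1]{S} the matrix $E$ is similar to $\begin{pmatrix}I_r&0\\0&0\end{pmatrix}$ with $r={\rm rank}(E)$, and the trace is similarity-invariant and is unchanged under the embedding $R\hookrightarrow Q$. Now fix $a\in R\setminus\{0\}$ and set $A:=\begin{pmatrix}a&1&0\\-1&0&0\\0&0&I_{n-2}\end{pmatrix}\in\mathbb M_n(R)$. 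Since $\det A=1$, $A$ is invertible in $\mathbb M_n(R)$ --- this is precisely where the hypothesis $n\ge 2$ enters --- so $A=E_1+E_2+E_3$ for idempotents $E_i$, and taking traces gives $a+(n-2)\cdot 1_R=\sum_{i=1}^{3}{\rm rank}(E_i)\cdot 1_R\in\mathbb Z\cdot 1_R$, whence $a\in\mathbb Z\cdot 1_R$. As $0\in\mathbb Z\cdot 1_R$ trivially, it follows that $R=\mathbb Z\cdot 1_R$.

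Because $R$ is a domain with $R=\mathbb Z\cdot 1_R$, either $R\cong\mathbb Z$ or $R\cong\mathbb Z_p$ for some prime $p$. The case $R\cong\mathbb Z$ is impossible: $-I_n$ is invertible, so $-I_n=E_1+E_2+E_3$ with each $E_i$ idempotent, and taking traces gives $-n={\rm rank}(E_1)+{\rm rank}(E_2)+{\rm rank}(E_3)\ge 0$, a contradiction (alternatively, $-1$ would then be a sum of three idempotents in $\mathbb M_n(\mathbb Z)$, contradicting \lemref{lem2.1}). Hence $R\cong\mathbb Z_p$ is a field over which every invertible matrix in $\mathbb M_n(R)$ is a sum of three idempotents, and \thmref{thm2.6} forces $R\cong\mathbb Z_2$ or $R\cong\mathbb Z_3$. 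The only genuine obstacle I anticipate is the choice of the invertible test matrix $A$ whose trace runs over all of $R$ as $a$ does; once $\det A$ is a unit and ${\rm tr}(A)=a+(n-2)\cdot 1_R$, the argument closely parallels \corref{cor2.7} and concludes through \thmref{thm2.6}.
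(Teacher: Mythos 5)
Your proof is correct and follows essentially the same route as the paper: the trace-equals-rank fact for idempotents via the quotient field and Steger's theorem, an invertible test matrix with trace $a+(n-2)\cdot 1_R$ forcing $R=\mathbb Z\cdot 1_R$, and then Theorem~\ref{thm2.6}. The only cosmetic differences are your choice of test matrix (determinant $1$ instead of the paper's $\begin{pmatrix}a&1\\1&0\end{pmatrix}$ block of determinant $-1$) and that you rule out characteristic $0$ by a rank-positivity argument before invoking Theorem~\ref{thm2.6}, whereas the paper first pins the characteristic to $2$, $3$ or $5$ via Lemma~\ref{lem2.1}; both endgames are valid.
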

\begin{proof} We verify the necessity. Let $Q$ be the field of quotients of $R$. As seen in proving Corollary \ref{cor2.7}, for any $E^2=E\in \mathbb M_n(R)$,  ${\rm tr}(E)={\rm rank}(E)\cdot 1_R\in {\mathbb Z}\cdot 1_R$. Let $a\in R$. Then  $\begin{pmatrix}\begin{pmatrix}a&1\\
                       1&0\end{pmatrix}&\bf 0\\
                       \bf 0&I_{n-2}\end{pmatrix}\in {\mathbb M}_n(R)$ is invertible. Write $\begin{pmatrix}\begin{pmatrix}a&1\\
                       1&0\end{pmatrix}&\bf 0\\
                       \bf 0&I_{n-2}\end{pmatrix}=E_1+\cdots+E_k$ where each $E_i$ is an idempotent matrix in ${\mathbb M}_n(R)$. Then 
\begin{equation*}
\begin{split}
a+(n-2)\cdot 1_R&={\rm tr}(E_1+\cdots+E_k)={\rm tr}(E_1)+\cdots+{\rm tr}(E_k)\\
&={\rm rank}(E_1)\cdot 1_R+\cdots+{\rm rank}(E_k)\cdot 1_R\\
&=\big({\rm rank}(E_1)+\cdots+{\rm rank}(E_k)\big)\cdot 1_R\in \mathbb Z\cdot 1_R.
\end{split}
\end{equation*} 
So, $a\in \mathbb Z\cdot 1_R$, and hence $R=\mathbb Z\cdot 1_R$. As arguing as in the proof of Corollary \ref{cor2.7}, we see $R\cong \mathbb Z_2$ or $R\cong \mathbb Z_3$.
\end{proof}

\section{Matrices over a commutative ring}

\begin{lem}\label{lem3.1}
Let $R$ be 
a commutative local ring and $n\ge 1$. If $E^2=E\in {\mathbb  M}_n(R)$, then ${\rm tr}(E)={\rm rank}(E)\cdot 1_R\in {\mathbb Z}\cdot 1_R$. 
\end{lem}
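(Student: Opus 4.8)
The plan is to show that $E$ is conjugate over $R$ to the block-diagonal idempotent $\begin{pmatrix}I_r&0\\0&0\end{pmatrix}$ with $r={\rm rank}(E)$, after which the conclusion is immediate from the conjugation-invariance of the trace.

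First I would translate idempotence into a module decomposition: since $E^2=E$, the free module $R^n$ splits as $R^n={\rm im}(E)\oplus\ker(E)$, where $\ker(E)={\rm im}(I_n-E)$. Each summand is then a direct summand of $R^n$, hence finitely generated and projective. Now I would invoke the fact that over a commutative local ring every finitely generated projective module is free — this follows from Nakayama's lemma by lifting a basis of $P/J(R)P$ over the residue field and then killing the (finitely generated) kernel, again by Nakayama. Thus ${\rm im}(E)$ is free of some rank $r$, and reducing $E$ modulo the maximal ideal $J(R)$ identifies $r$ with the rank of $\overline{E}$ over the residue field $R/J(R)$, so $r={\rm rank}(E)$; similarly $\ker(E)$ is free of rank $n-r$.

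Finally I would pick an $R$-basis of ${\rm im}(E)$ followed by an $R$-basis of $\ker(E)$; because the sum is direct and equals $R^n$, the concatenation is an $R$-basis of $R^n$, so the associated change-of-basis matrix $U$ lies in ${\rm GL}_n(R)$. Since $E$ acts as the identity on ${\rm im}(E)$ and as zero on $\ker(E)$, one gets $U^{-1}EU=\begin{pmatrix}I_r&0\\0&0\end{pmatrix}$, whence ${\rm tr}(E)={\rm tr}(U^{-1}EU)=r\cdot 1_R={\rm rank}(E)\cdot 1_R\in\mathbb Z\cdot 1_R$. The only ingredient that is not routine bookkeeping is the projective-implies-free statement for finitely generated modules over a local ring; apart from that, the mild point to watch is that the two chosen bases genuinely patch to a basis of $R^n$, which holds precisely because ${\rm im}(E)\oplus\ker(E)=R^n$. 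I do not anticipate a real obstacle.
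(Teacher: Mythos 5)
Your proof is correct, and it reaches the key structural fact by a genuinely different route than the paper. The paper argues by induction on $n$: since $R$ is local, a nonzero idempotent matrix has a unit entry, and the cited diagonalization theorem of Song--Guo (\cite[Theorem 4]{SG}) is used to split off a $1\times 1$ idempotent block, so that $E$ is similar to $\begin{pmatrix}a&0\\0&E_2\end{pmatrix}$ with $a\in\{0,1\}$; iterating gives the $0$--$1$ diagonal form and hence the trace formula. You instead obtain the full diagonalization in one step from the splitting $R^n={\rm im}(E)\oplus\ker(E)$ together with the theorem that finitely generated projective modules over a local ring are free, concluding $U^{-1}EU=\begin{pmatrix}I_r&0\\0&0\end{pmatrix}$. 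Both arguments hinge on the same similarity statement; yours is self-contained and conceptual (at the cost of invoking the projective-implies-free theorem, which is exactly the nontrivial input), while the paper's is more matrix-theoretic and outsources the diagonalization to a citation. The only point worth making explicit in your write-up is the convention for ${\rm rank}(E)$ over a local ring: once the similarity to $\begin{pmatrix}I_r&0\\0&0\end{pmatrix}$ is in hand, any reasonable notion of rank (determinantal rank, rank of the free module ${\rm im}(E)$, or rank of the reduction $\overline{E}$ over the residue field) coincides with $r$, so the identification $r={\rm rank}(E)$ is harmless, but it deserves a sentence.
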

\begin{proof} 

The claim is clearly true for $n=1$. Assume $n>1$.
If $E\in {\mathbb M}_n(J(R))$, then $E=0$, so ${\rm tr}(E)={\rm rank}(E)\cdot 1_R\in \mathbb Z\cdot 1_R$. Assume that
$E=(e_{ij})\notin {\mathbb M}_n(J(R))$, so for some $i,j$, $e_{ij}\in U(R)$.
Then $E$ is equivalent to $\begin{pmatrix}e_{ij}&\cdots \\
                       \vdots&\ddots\\
                       \end{pmatrix}$, which is equivalent to $\begin{pmatrix}1&\bf 0 \\
                       \bf 0&E_1\\
                       \end{pmatrix}$, where $E_1$ is an $(n-1)\times (n-1)$ matrix. By \cite[Theorem 4]{SG},  
$E$ is similar to a block diagonal  matrix $\begin{pmatrix}a&\bf 0 \\
                       \bf 0&E_2\\
                       \end{pmatrix}$ where $a^2=a$ and $E_2$ is an $(n-1)\times (n-1)$ idempotent matrix.
Therefore, $a=0$ or $1$ and ${\rm tr}(E_2)={\rm rank}(E_2)\cdot 1_R\in \mathbb Z\cdot 1_R$ by induction assumption.
As similarity preserves trace and rank of matrices over commutative local rings, we have
${\rm tr}(E)={\rm tr}\begin{pmatrix}a&\bf 0 \\
                       \bf 0&E_2\\
                       \end{pmatrix}=a+{\rm tr}(E_2)=a+{\rm rank}(E_2)\cdot 1_R={\rm rank}\begin{pmatrix}a&\bf 0 \\
                       \bf 0&E_2\\
                       \end{pmatrix}\cdot 1_R={\rm rank}(E)\cdot 1_R
											\in \mathbb Z\cdot 1_R$. 
\end{proof}

\begin{thm}\label{thm3.2} Suppose that every matrix in $\mathbb M_n(R)$ is a sum of three idempotents where 
$R$ is a commutative ring and $n\ge 1$.  Then $J(R)$ is nil and $R/J(R)$ has identity $x^3=x$.
If in addition $R$ is an indecomposable ring, then  $R\cong \mathbb Z_n$ where $n=2,3,$ or $4$.
\end{thm}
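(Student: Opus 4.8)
The plan is to prove the three assertions in turn. The first is quick: for a maximal ideal $\mathfrak m$, the surjection $R\twoheadrightarrow R/\mathfrak m$ induces a surjection $\mathbb M_n(R)\twoheadrightarrow\mathbb M_n(R/\mathfrak m)$ carrying idempotents to idempotents, so every matrix over the field $R/\mathfrak m$ is a sum of three idempotents; by \thmref{thm2.6}, $R/\mathfrak m\cong\mathbb Z_2$ or $\mathbb Z_3$. Since $J(R)=\bigcap_{\mathfrak m}\mathfrak m$, the ring $R/J(R)$ embeds into $\prod_{\mathfrak m}R/\mathfrak m$, a product of copies of $\mathbb Z_2$ and $\mathbb Z_3$; as $x^3=x$ holds in both and is inherited by products and subrings, it holds in $R/J(R)$.

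The technical heart is the claim that \emph{every $a\in R$ satisfies $\prod_{v=0}^{3n}(a-v)=0$}. I would prove it by localization. Write $aE_{11}=E_1+E_2+E_3$ with $E_i^2=E_i$ in $\mathbb M_n(R)$, where $E_{11}$ is the standard matrix unit, so that $\mathrm{tr}(aE_{11})=a$. For a maximal ideal $\mathfrak m$, localize and apply \lemref{lem3.1} to the commutative local ring $R_{\mathfrak m}$: each $\mathrm{tr}\big((E_i)_{\mathfrak m}\big)=r_i\cdot 1$ with $r_i\in\{0,\dots,n\}$, so the image of $a$ in $R_{\mathfrak m}$ equals $N_{\mathfrak m}\cdot 1$ with $N_{\mathfrak m}:=r_1+r_2+r_3\in\{0,\dots,3n\}$. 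Then $\prod_{v=0}^{3n}(a-v)$ has zero image in every $R_{\mathfrak m}$, since the factor with $v=N_{\mathfrak m}$ vanishes there; hence this element of $R$ is $0$.

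Granting this identity, $J(R)$ is nil. Applying \lemref{lem2.1} to $-I_n=E_1+E_2+E_3$ in $\mathbb M_n(R)$ gives $60\cdot 1_R=0$, so the idempotents from $\mathbb Z/60\cong\mathbb Z/4\times\mathbb Z/3\times\mathbb Z/5$ split $R\cong R_2\times R_3\times R_5$ with $4\cdot1_{R_2}=3\cdot1_{R_3}=5\cdot1_{R_5}=0$; each factor inherits the hypothesis (a matrix in a product is a sum of three idempotents iff each component is), and if $R_5\neq 0$ it would have a residue field of characteristic $5$, contradicting the first part, so $R=R_2\times R_3$ and it suffices to treat $J(R_2)$ and $J(R_3)$. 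Every residue field of $R_3$ has characteristic $3$, hence by the first part is $\mathbb Z_3$, so $R_3/J(R_3)$ is a subdirect product of $\mathbb Z_3$'s; for $a\in J(R_3)$, in $\prod_{v=0}^{3n}(a-v)=0$ each factor with $3\nmid v$ is a unit of $R_3$ (its image mod $J(R_3)$ is the unit $-v$), so after cancelling these $\prod_{3\mid v}(a-v)=0$, and since $3=0$ in $R_3$ each such $a-v$ equals $a$, forcing $a^{\,n+1}=0$. Likewise, in $R_2$ the factors with $v$ odd are units, so $\prod_{v\text{ even}}(a-v)=0$; modulo $4$ the even $v$ contribute only $a$ and $a-2=a+2$, and as $4=0$ in $R_2$ we have $(a+2)^2=a^2$, so this product collapses to a power of $a$ and $a$ is nilpotent.

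Finally, suppose $R$ is indecomposable, so $\mathrm{Spec}(R)$ is connected. Then the image of any idempotent $E\in\mathbb M_n(R)$ is a finitely generated projective $R$-module of constant rank $r$, whence $\mathrm{tr}(E)=r\cdot 1_R$ exactly: the element $\mathrm{tr}(E)-r\cdot 1_R$ dies in every localization $R_{\mathfrak p}$, over which $E_{\mathfrak p}$ is conjugate to $\mathrm{diag}(I_r,0)$, so it is $0$. Rerunning the heart of the argument without localizing now shows every $a\in R$ is an integer multiple of $1_R$, so $R=\mathbb Z\cdot 1_R$ is a quotient of $\mathbb Z$; together with $60\cdot1_R=0$ and indecomposability this gives $R\cong\mathbb Z_{p^k}$ with $p^k\mid 60$, i.e.\ $p^k\in\{2,3,4,5\}$, and $\mathbb Z_5$ is excluded by \thmref{thm2.6}. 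The only genuinely nontrivial points are the polynomial-identity step and, for the last part, the constant-rank property of finitely generated projectives over a ring with connected spectrum; the remainder is bookkeeping with units modulo $J(R)$ and with the characteristic.
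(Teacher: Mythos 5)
Your proof is correct, but it routes around the paper's argument at two key points, and it is worth recording the comparison. For the first assertion, the paper passes to $R/{\rm Nil}(R)$, writes it as a subdirect product of integral domains, and invokes Corollary~\ref{cor2.7} to identify each domain as $\mathbb Z_2$ or $\mathbb Z_3$; this yields $x^3=x$ on $R/{\rm Nil}(R)$, from which ``$J(R)$ is nil'' falls out in one line (for $a\in J(R)$, $\bar a(\bar a^2-1)=0$ and $a^2-1$ is a unit, so $a\in{\rm Nil}(R)$). You instead work with residue fields and Theorem~\ref{thm2.6} to get $x^3=x$ on $R/J(R)$, which does \emph{not} by itself give nilness of $J(R)$, so you are forced into the separate localization argument establishing $\prod_{v=0}^{3n}(a-v)=0$ for all $a\in R$ and then a unit-cancellation analysis in the factors $R_2$ and $R_3$. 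That argument is valid (trace and idempotency commute with localization, Lemma~\ref{lem3.1} applies to each $R_{\mathfrak m}$, and an element vanishing in every $R_{\mathfrak m}$ is zero), and it buys you an explicit polynomial identity and a nilpotence bound that the paper does not provide, but it is considerably longer than necessary; one small imprecision is that in $R_2$ the surviving product is $a^m(a+2)^k$, which for odd $k$ is $a^{m+k-1}(a+2)$ rather than a literal power of $a$ --- you then need the extra line $a^{m+k}=-2a^{m+k-1}$, hence $a^{m+k+1}=4a^{m+k-1}=0$. For the indecomposable case the paper proves $R$ is local by lifting idempotents modulo the nil radical and then applies Lemma~\ref{lem3.1} directly, whereas you use connectedness of ${\rm Spec}(R)$ and the constant-rank property of finitely generated projectives to get ${\rm tr}(E)\in\mathbb Z\cdot 1_R$ globally; both are sound and both then finish identically via $R=\mathbb Z\cdot 1_R$, $60\cdot 1_R=0$ (Lemma~\ref{lem2.1} applied to $-I_n$ in $\mathbb M_n(R)$), and the exclusion of $\mathbb Z_5$ by Theorem~\ref{thm2.6}.
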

\begin{proof}
As $R$ is commutative, ${\rm Nil}(R)$ is an ideal of $R$, and $R/{\rm Nil}(R)$ is reduced. So $R/{\rm Nil}(R)$ is a subdirect product
of integral domains $\{R_\alpha\}$. As $\mathbb M_n(R_\alpha)$ is a homomorphic image of  $\mathbb M_n(R)$, every matrix in $\mathbb M_n(R_\alpha)$ is a sum of three idempotents. 
So, by Corollary \ref{cor2.7}, $R_\alpha$ is isomorphic to either $\mathbb Z_2$ or $\mathbb Z_3$. This shows that each $R_\alpha$ has identity $x^3=x$, so $R/{\rm Nil}(R)$ has identity $x^3=x$. It easily follows that $J(R)={\rm Nil}(R)$. 

Suppose that $R$ is indecomposable. Let $a\in R\backslash J(R)$. Then $\bar a^3=\bar a$, so $a^4-a^2\in J(R)$. As $J(R)$ is nil, idempotents lift modulo $J(R)$, so $a^2-e\in J(R)$ for some $e^2=e\in R$. If $e=0$, then $a=(a-a^2)+a^2\in J(R)$, a contradiction. So, $e=1$, and hence  $a^2-1\in J(R)$. This shows that $a\in U(R)$. Hence, we have shown that $R$ is a local ring. For any $a\in R$,  
$\begin{pmatrix}a&0\\
                       0&I_{n-1}\end{pmatrix}$  is a sum of three idempotents. We deduce by Lemma \ref{lem3.1} that ${\rm tr}\begin{pmatrix}a&0\\
                       0&I_{n-1}\end{pmatrix}\in \mathbb Z\cdot 1_R$. So $a\in \mathbb Z\cdot 1_R$, and hence $R=\mathbb Z\cdot 1_R$.  As $2^2\cdot 3\cdot 5=0$ in $R$ (by Lemma \ref{lem2.1}), the Chinese Remainder Theorem ensures that $R=A\times B\times C$ where $2^2=0$ in $A$, $3=0$ in $B$ and $5=0$ in $C$. As $R$ is indecomposable, $R=A$ or $R=B$ or $R=C$, which implies $R\cong \mathbb Z_n$ where $n=2,3,4$ or $5$. But $n=5$ is ruled out by Theorem \ref{thm2.6}. 
\end{proof}

\begin{lem}\label{lem3.3}
Let $R$ be a ring and $n\ge 1$. Then every element of $\mathbb M_n(R)$ is a sum of three idempotents if and only if every element of $\mathbb M_n(R/I)$ is a sum of three idempotents for all indecomposable factor rings $R/I$ of $R$.
\end{lem}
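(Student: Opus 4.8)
The plan is to prove the two implications separately; the forward one is immediate, and the reverse one is a minimal-counterexample argument, via Zorn's Lemma, that reduces in one step to the indecomposable case. For $(\Rightarrow)$: if every element of $\mathbb M_n(R)$ is a sum of three idempotents, then for any ideal $I$ of $R$ the canonical surjection $\mathbb M_n(R)\twoheadrightarrow\mathbb M_n(R/I)$ is additive and carries idempotents to idempotents, so every element of $\mathbb M_n(R/I)$ is again a sum of three idempotents; in particular this holds when $R/I$ is indecomposable. (This half uses nothing about indecomposability.)

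For $(\Leftarrow)$, assume every element of $\mathbb M_n(R/I)$ is a sum of three idempotents whenever $R/I$ is an indecomposable factor ring of $R$, and suppose toward a contradiction that some $A\in\mathbb M_n(R)$ is not a sum of three idempotents. Let $\mathcal I$ be the set of ideals $I$ of $R$ for which the image of $A$ in $\mathbb M_n(R/I)$ is not a sum of three idempotents; then $0\in\mathcal I$ while $R\notin\mathcal I$ (over the zero ring $0=0+0+0$), so every member of $\mathcal I$ is a proper ideal. The first thing to check is that $\mathcal I$ is closed under unions of chains: if $\{I_\lambda\}$ is a chain in $\mathcal I$ with union $I^{\ast}$ and the image of $A$ in $\mathbb M_n(R/I^{\ast})$ were $\overline{E_1}+\overline{E_2}+\overline{E_3}$ with each $\overline{E_i}$ idempotent, then, lifting the $E_i$ to $\mathbb M_n(R)$ entrywise, the finitely many entries of $A-(E_1+E_2+E_3)$ and of each $E_i^2-E_i$ would all lie in $I^{\ast}=\bigcup_\lambda I_\lambda$, hence all in a single $I_\lambda$, forcing the image of $A$ in $\mathbb M_n(R/I_\lambda)$ to be a sum of three idempotents and contradicting $I_\lambda\in\mathcal I$. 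So $\mathcal I$ is inductive, and Zorn's Lemma furnishes a maximal element $I_0\in\mathcal I$, necessarily proper.

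Next I would show that $R/I_0$ is indecomposable. If not, write $R/I_0\cong S_1\times S_2$ with $S_1,S_2\neq 0$; each $S_j$ is then isomorphic to a factor ring $R/J_j$ with $J_j\supsetneq I_0$, so maximality of $I_0$ gives that the image of $A$ in each $\mathbb M_n(S_j)$ is a sum of three idempotents, and under $\mathbb M_n(R/I_0)\cong\mathbb M_n(S_1)\times\mathbb M_n(S_2)$ these recombine coordinatewise into a representation of the image of $A$ over $R/I_0$ as a sum of three idempotents, contradicting $I_0\in\mathcal I$. Hence $R/I_0$ is an indecomposable factor ring of $R$; by the standing hypothesis every element of $\mathbb M_n(R/I_0)$, in particular the image of $A$, is a sum of three idempotents, and this last contradiction shows that no such $A$ exists.

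I expect the only delicate step to be the verification that $\mathcal I$ is closed under directed unions, which is exactly the place where one uses that ``being a sum of three idempotents'' is a finitary condition (finitely many elements, finitely many polynomial identities); everything else is formal. Note that the argument nowhere uses anything special about the number three, about idempotents as opposed to, say, involutions, or about commutativity of $R$, so the same scheme yields the analogous reductions in those settings.
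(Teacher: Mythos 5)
Your proposal is correct and follows essentially the same route as the paper: a Zorn's Lemma argument on the family of ideals $I$ for which the image of the offending matrix in $\mathbb M_n(R/I)$ fails to be a sum of three idempotents, using the finitary nature of the condition to verify closure under unions of chains, and then using maximality to show the resulting quotient is indecomposable. No gaps; the one step you flag as delicate (chain closure via lifting finitely many witnesses into a single $I_\lambda$) is handled exactly as in the paper.
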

\begin{proof} The necessity is clear. For the sufficiency,  
assume on the contrary that some $(a_{ij})\in {\mathbb M}_n(R)$ is not a sum of three idempotent matrices.  Then 
\begin{equation*}
{\mathcal F}=\big\{I\triangleleft R: \big(\overline{a_{ij}}\big)\in {\mathbb M}_n(R/I)\,\,\,{\text{is not a sum of three idempotents}}\big\}
\end{equation*}
is not empty. For a chain $\{I_{\lambda}\}$ of elements of $\mathcal F$, let $I=\cup _{\lambda}I_\lambda$. Then $I$ is an ideal of $R$. Assume that $\big(\overline{a_{ij}}\big)\in {\mathbb M}_n(R/I)$
is a sum of three idempotents. Then there exist $(\overline{e_{ij}}), (\overline{f_{ij}}), (\overline{g_{ij}})\in {\mathbb M}_n(R/I)$ such that 
\begin{equation}
\begin{split}
&(\overline{a_{ij}})=(\overline{e_{ij}})+(\overline{f_{ij}})+(\overline{g_{ij}}),\\
&(\overline{e_{ij}})(\overline{e_{ij}})=(\overline{e_{ij}}),\,\,(\overline{f_{ij}})(\overline{f_{ij}})=(\overline{f_{ij}}),\,\, (\overline{g_{ij}})(\overline{g_{ij}})=(\overline{g_{ij}}). 
\end{split}
\end{equation} 
Thus, all the following elements are in ${\mathbb M}_n(I)$:
\begin{equation*}
\begin{split}
&(a_{ij})-(e_{ij})-(f_{ij})-(g_{ij}),\\
&(e_{ij})-(e_{ij})(e_{ij}),\,\,(f_{ij})-(f_{ij})(f_{ij}),\,\,(g_{ij})-(g_{ij})(g_{ij})(w_{ij}).
\end{split}
\end{equation*}
Because $\{I_\lambda\}$ is a chain, there exists some $I_\lambda$ such that all these elements are in ${\mathbb M}_n(I_\lambda)$.  
Hence (3.1) holds in ${\mathbb M}_n(R/I_\lambda)$. So, $\big(\overline{a_{ij}}\big)\in {\mathbb M}_n(R/I_\lambda)$
is a sum of three idempotents. This contradiction shows that $I$ is in $\mathcal F$. So $\mathcal F$ is  an inductive set. By Zorn's Lemma, $\mathcal F$ has a maximal element, say $I$. 
We next show that $R/I$ is indecomposable. In fact, if $R/I$ is decomposable, then there exist ideals $I_1, I_2$ of $R$ such that $I\subsetneqq I_k\subsetneqq R$ ($k=1,2$), $R=I_1+I_2$ and $I_1\cap I_2=I$.  So we have the isomorphism
\begin{equation*}
R/I\cong R/I_1\times R/I_2 \,\,\,{\text{via}}\,\,\,r+I\mapsto (r+I_1, r+I_2),
\end{equation*}
which induces an isomorphism 
\begin{equation*}
{\mathbb M}_n(R/I)\cong {\mathbb M}_n(R/I_1)\times {\mathbb M}_n(R/I_2) \,\,\,{\text{via}}\,\,\,(r_{ij}+I)\mapsto \big((r_{ij}+I_1), (r_{ij}+I_2)\big).
\end{equation*}
By the maximality of $I$, $\big(\overline{a_{ij}}\big)\in {\mathbb M}_n(R/I_k)$
is a sum of three idempotents for $k=1,2$. It follows that $\big(\overline{a_{ij}}\big)\in {\mathbb M}_n(R/I)$
is a sum of three idempotents. This contradiction shows that $R/I$ is indecomposable.  But by the hypothesis, every matrix in ${\mathbb M}_n(R/I)$ is a sum of three idempotents, contradicting that $I\in \mathcal F$. 
\end{proof}

\begin{thm}\label{thm3.4}
Let $R$ be a commutative ring with ${\rm Nil}(R)=0$ {\rm (}e.g., $J(R)=0${\rm )} and $n\ge 1$.  The following are equivalent:
\begin{enumerate}
\item Every matrix in $\mathbb M_n(R)$ is a sum of three idempotents.

\item $R\cong A\times B$, where $A$ is a Boolean ring and $B$ is zero or a subdirect product of $\mathbb Z_3$'s.
\item $R$ has the identity $x^3=x$.
\end{enumerate}
\end{thm}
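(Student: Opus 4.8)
The plan is to establish the cycle $(1)\Rightarrow(2)\Rightarrow(3)\Rightarrow(1)$. The implication $(2)\Rightarrow(3)$ is routine: a Boolean ring satisfies $x^2=x$, hence $x^3=x$, and a subdirect product of copies of $\mathbb Z_3$ satisfies $x^3=x$ since $\mathbb Z_3$ does; the identity $x^3=x$ is preserved by direct products, so $A\times B$ satisfies it. For $(3)\Rightarrow(1)$, I would reduce to the indecomposable case via Lemma \ref{lem3.3}: if $R$ has identity $x^3=x$ then so does every factor ring $R/I$, and an indecomposable ring with identity $x^3=x$ is either $\mathbb Z_2$ or $\mathbb Z_3$ (such a ring is reduced since $x^3=x$ forces ${\rm Nil}(R)=0$, it is von Neumann regular hence has $J(R)=0$, and being indecomposable with $x^3=x$ it is a field satisfying $x^3=x$, so $F\cong\mathbb Z_2$ or $F\cong\mathbb Z_3$). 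Then by Theorem \ref{thm2.6}, every matrix in $\mathbb M_n(R/I)$ is a sum of three idempotents for every indecomposable factor ring, and Lemma \ref{lem3.3} gives $(1)$.

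The substantive implication is $(1)\Rightarrow(2)$. Starting from $(1)$, Theorem \ref{thm3.2} tells us $R/J(R)$ has identity $x^3=x$, and since ${\rm Nil}(R)=0$ we in fact have $J(R)={\rm Nil}(R)=0$, so $R$ itself has identity $x^3=x$. From $x^3=x$ one extracts a central idempotent decomposition as follows: the element $e:=x^2$ is idempotent for every $x$, but more to the point I would argue that the set of "square elements" governs a splitting. Concretely, since $R$ satisfies $x^3=x$, for each $x$ we have $x(x-1)(x+1)=0$; localizing, $R$ is a subdirect product of fields satisfying $x^3=x$, i.e. of copies of $\mathbb Z_2$ and $\mathbb Z_3$. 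The key step is to \emph{glue} these into the advertised product form: one shows $2\cdot 3=6=0$ is false in general but $6x=0$... rather, the cleaner route is to consider the ideal $A_0$ generated by all $x$ with $2x=0$ and use that in a ring with $x^3=x$ the Chinese Remainder Theorem applies to the relation $x^3=x$ pointwise. I would instead directly split off the "characteristic 3 part": let $e\in R$ be the (necessarily central, since $R$ is commutative) idempotent such that $3R=eR$; this exists because $3\cdot R$ is an idempotent ideal in a ring where $x^3=x$ (as $3\bar a=3\bar a^3$ and one checks $3R$ is generated by an idempotent using von Neumann regularity of $R$, which follows from $x^3=x$). Then $R\cong R/eR\times R/(1-e)R$ where in the first factor $3=0$ so it is a subdirect product of $\mathbb Z_3$'s, and in the second factor $3$ is a unit, forcing every subdirectly-irreducible quotient to be $\mathbb Z_2$, i.e. that factor is Boolean.

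The main obstacle I anticipate is making the decomposition $R\cong A\times B$ genuinely global rather than merely subdirect: knowing $R$ embeds in $\prod \mathbb Z_2\times\prod\mathbb Z_3$ is easy, but producing an honest central idempotent $e$ with $3R=eR$ requires that $3R$ (equivalently the annihilator ideal $\{x:3x=0\}$) be a direct summand. The way I would handle this is to invoke that a commutative ring with $x^3=x$ is von Neumann regular (for each $a$, $a=a^2\cdot a=a\cdot a\cdot a$ shows $a$ is regular with $a$ itself as a "weak inverse", and commutative von Neumann regular rings have all finitely generated ideals generated by idempotents); applying this to the finitely generated — indeed principal — ideal $3R$ yields the idempotent $e$, and commutativity makes it central automatically. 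Everything else is bookkeeping: verifying $R/eR$ has characteristic $3$ and is reduced hence a subdirect product of $\mathbb Z_3$'s (its only subdirectly irreducible images being fields with $x^3=x$ and characteristic $3$, namely $\mathbb Z_3$), and verifying $R/(1-e)R$ has $2=0$... more precisely that $3$ is invertible there so its fields are all $\mathbb Z_2$, making it Boolean.
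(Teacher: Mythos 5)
Your proposal is correct and follows essentially the same route as the paper: $(1)\Rightarrow(3)$ via Theorem \ref{thm3.2} together with ${\rm Nil}(R)=0$, and $(3)\Rightarrow(1)$ by reducing to indecomposable factor rings via Lemma \ref{lem3.3} and invoking Theorem \ref{thm2.6}. The only place you add material is the equivalence $(2)\Leftrightarrow(3)$, which the paper dismisses as ``easily seen'' and which you correctly flesh out by using von Neumann regularity of a ring with $x^3=x$ to produce the central idempotent $e$ with $3R=eR$.
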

\begin{proof}
The implication $(1)\Rightarrow (3)$ follows from Theorem \ref{thm3.2}.  The equivalence $(2)\Leftrightarrow (3)$ is easily seen. 

$(3)\Rightarrow (1)$. Let $R'$ be an indecomposable factor ring of $R$. Then $R'$ has identity $x^3=x$.
For any $0\not= a\in R'$, $a^2$ is a nonzero idempotent of $R'$, so $a^2=1$. Thus $R'$ is a field, and 
it easily follows that $R'$ is isomorphic to $\mathbb Z_2$ or $\mathbb Z_3$.
Hence, by Lemma \ref{lem3.3} and Theorem \ref{thm2.6}, every matrix in $\mathbb M_n(R)$ is a sum of three idempotents.
\end{proof}

\begin{exa}\label{exa3.5}  The matrix $\begin{pmatrix}1&1 \\
                       1&0\\
                       \end{pmatrix}\in {\mathbb M}_2(\mathbb Z_4)$ is not a sum of three idempotents.
\end{exa}
\begin{proof}Let $a=\begin{pmatrix}1&1 \\
                       1&0\\
                       \end{pmatrix}\in {\mathbb M}_2(\mathbb Z_4)$. Assume that $a=e+f+g$ is a sum of
three idempotents. 
We first see that $e\not= 0$. In fact, if $e=0$ then $f$ and $g$ are non-trivial idempotents, so ${\rm rank}(f)={\rm rank}(g)=1$; hence $1={\rm tr}(a)={\rm tr}(f)+{\rm tr}(g)=({\rm rank}(f)+{\rm rank}(g))\cdot 1_{\mathbb Z_4}=2$, a contradiction. So $e\not= 0$. Similarly, $f\not= 0$ and $g\not= 0$. We next see that $e\not= 1$. In fact, if $e=1$ then $f$ and $g$ are non-trivial idempotents, so ${\rm rank}(f)={\rm rank}(g)=1$; hence $-1={\rm tr}(a-e)={\rm tr}(f)+{\rm tr}(g)=({\rm rank}(f)+{\rm rank}(g))\cdot 1_{\mathbb Z_4}=2$, a contradiction. So $e\not= 1$. Similarly, $f\not= 1$ and $g\not= 1$. Hence, $e, f, g$ are non-trivial idempotents, so they all have rank $1$. Thus, $1={\rm tr}(a)={\rm tr}(e)+{\rm tr}(f)+{\rm tr}(g)=({\rm rank}(e)+{\rm rank}(f)+{\rm rank}(g))\cdot 1_{\mathbb Z_4}=3$, a contradiction. Therefore, $a$ is not a sum of three idempotents.
\end{proof} 

\begin{prop}\label{prop3.6} The following are equivalent for a commutative ring $R$:
\begin{enumerate}
\item Every matrix in $\mathbb M_2(R)$ is a sum of three idempotents.
\item Every matrix in $\mathbb M_n(R)$ is a sum of three idempotents for all $n\ge 1$.
\item $R\cong A\times B$, where $A$ is a Boolean ring and $B$ is zero or a subdirect product of $\mathbb Z_3$'s.
\end{enumerate}
\end{prop}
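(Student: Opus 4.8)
The plan is to establish the cyclic chain $(2)\Rightarrow(1)\Rightarrow(3)\Rightarrow(2)$, with all the substance sitting in $(1)\Rightarrow(3)$. Two of the implications are immediate: for $(2)\Rightarrow(1)$ one simply specializes to $n=2$, and for $(3)\Rightarrow(2)$ I would observe that a ring $R\cong A\times B$ as in $(3)$ is reduced, so ${\rm Nil}(R)=0$, and that $R$ satisfies the identity $x^3=x$ (in the Boolean factor $A$ because $x^2=x$, and in $B$ because each $\mathbb Z_3$ does); then Theorem~\ref{thm3.4}, in the form ``$(3)\Rightarrow(1)$'' applied with every $n\ge 1$, gives that each matrix in $\mathbb M_n(R)$ is a sum of three idempotents.

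For $(1)\Rightarrow(3)$ the first move is a \emph{reduction to indecomposable quotients}. Assuming $(1)$, Lemma~\ref{lem3.3} (with $n=2$) shows that for every indecomposable factor ring $R/I$ of $R$, every matrix in $\mathbb M_2(R/I)$ is a sum of three idempotents. Since $R/I$ is commutative and indecomposable, Theorem~\ref{thm3.2} (with $n=2$) forces $R/I\cong\mathbb Z_2$, $\mathbb Z_3$, or $\mathbb Z_4$; but Example~\ref{exa3.5} exhibits a matrix of $\mathbb M_2(\mathbb Z_4)$ that is not a sum of three idempotents, so the case $\mathbb Z_4$ is excluded. Hence every indecomposable factor ring of $R$ is isomorphic to $\mathbb Z_2$ or $\mathbb Z_3$.

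The second move is \emph{propagating $x^3=x$ back to $R$ itself}. Suppose $b:=a^3-a\ne 0$ for some $a\in R$. By Zorn's Lemma choose an ideal $P$ maximal with respect to $b\notin P$. Then $R/P$ is subdirectly irreducible: every nonzero ideal of $R/P$ is $J/P$ for an ideal $J\supsetneq P$, so $b\in J$ by maximality, whence the nonzero ideal generated by the image of $b$ is contained in every nonzero ideal of $R/P$. A subdirectly irreducible commutative ring has no nontrivial idempotent $e$ (otherwise $eR/P$ and $(1-e)R/P$ would be nonzero ideals intersecting in $0$), so $R/P$ is indecomposable, hence $R/P\cong\mathbb Z_2$ or $\mathbb Z_3$ by the first move. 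Those rings satisfy $x^3=x$, so $\bar b=0$, i.e. $b\in P$, a contradiction. Therefore $R$ satisfies $x^3=x$; in particular ${\rm Nil}(R)=0$, and Theorem~\ref{thm3.4} (in the form ``$(3)\Rightarrow(2)$'') yields the decomposition required in $(3)$.

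The only nonformal point I anticipate is this second move — passing from a statement about the indecomposable factor rings of $R$ to an identity on $R$ — and the subdirect-irreducibility argument above is the clean way around it (equivalently, by Birkhoff's theorem $R$ embeds into a product of such factor rings and $x^3=x$ passes to subrings of products). Everything else is a bookkeeping assembly of Lemma~\ref{lem3.3}, Theorem~\ref{thm3.2}, Example~\ref{exa3.5}, and Theorem~\ref{thm3.4}.
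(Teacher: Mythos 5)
Your proposal is correct and follows essentially the same route as the paper: reduce to indecomposable (subdirectly irreducible) factor rings, identify them as $\mathbb Z_2$ or $\mathbb Z_3$ via Theorem~\ref{thm3.2} and Example~\ref{exa3.5}, and reassemble; the paper simply invokes Birkhoff's theorem up front where you construct the subdirectly irreducible quotients by hand via Zorn's lemma to deduce $x^3=x$ before citing Theorem~\ref{thm3.4}. No gaps.
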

\begin{proof}
$(2)\Rightarrow (1)$. This is clear.

$(3)\Rightarrow (2)$. If $(3)$ holds, then $R$ has identity $x^3=x$, and so ${\rm Nil}(R)=0$. 
Thus, $(2)$ follows from Theorem \ref{thm3.4}.

$(1)\Rightarrow (3)$. By Birkhoff Theorem, $R$ is isomorphic to a subdirect product of subdirectly
irreducible rings $\{R_\alpha\}$. For each $\alpha$, $(1)$ holds for ${\mathbb M}_2(R_\alpha)$. As $R_\alpha$ is indecomposable, $R_\alpha\cong \mathbb Z_n$ where $n=2, 3$ or $4$ by Theorem \ref{thm3.2}. But, $n\not= 4$ by Example \ref{exa3.5}, so $R_\alpha\cong \mathbb Z_2$ or $R_\alpha\cong \mathbb Z_3$.
It follows that $R$ is a direct product of a Boolean ring and a subdirect product of $\mathbb Z_3$'s.
\end{proof}

\begin{rem} Let $R$ be a commutative ring and $n\ge 1$. If every matrix in ${\mathbb M}_n(\mathbb Z_4)$ is a sum of three idempotents {\rm (}e.g., $n=1${\rm )}, then every matrix in ${\mathbb M}_n(R)$ is a sum of three idempotents if and only if every indecomposable factor ring of $R$ is isomorphic to $\mathbb Z_2$, $\mathbb Z_3$ or $\mathbb Z_4$. If not every matrix in ${\mathbb M}_n(\mathbb Z_4)$ is a sum of three idempotents {\rm (}e.g., $n=2${\rm )}, then every matrix in ${\mathbb M}_n(R)$ is a sum of three idempotents if and only if every indecomposable factor ring of $R$ is isomorphic to $\mathbb Z_2$ or $\mathbb Z_3$. Therefore, determining when
every matrix in $\mathbb M_n(R)$ is a sum of three idempotents depends on whether every matrix in $\mathbb M_n(\mathbb Z_4)$ is a sum of three idempotents. But we have been unable to identify the integers $n$ such that every matrix in $\mathbb M_n(\mathbb Z_4)$ is a sum of three idempotents.
\end{rem}

We conclude this section with a characterization of rings for which every element is a sum of three commuting idempotents. 
\begin{prop}\label{prop3.8}
The following are equivalent for a ring $R$:
\begin{enumerate}
\item Every element of $R$ is a sum of three commuting idempotents.
\item $R$ is commutative and every element of $R$ is a sum of three idempotents.
\item $R$ is one of the following types:
\begin{enumerate}
\item $R/J(R)$ is Boolean with $J(R)=2\cdot {\rm idem}(R)$ and $4=0$ in $R$.
\item $R$ is a subdirect product of $\mathbb Z_3$'s.
\item $R\cong A\times B$ where $A/J(A)$ is Boolean with $J(A)=2\cdot {\rm idem}(A)$ and $4=0$ in $A$, and $B$ is a subdirect product of $\mathbb Z_3$'s.
\end{enumerate}
\end{enumerate}
\end{prop}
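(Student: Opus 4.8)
\emph{Outline of the proof.} Since all idempotents of a commutative ring commute, $(2)\Rightarrow(1)$ is immediate, and for $(1)\Rightarrow(2)$ it suffices to prove that (1) forces $R$ to be commutative (then a sum of three commuting idempotents is simply a sum of three idempotents). The plan for the commutativity is: extract a polynomial identity, split $R$ by characteristic, and settle each piece. First, if $a=e+f+g$ with $e,f,g$ pairwise commuting idempotents, then $a$ lies in the commutative ring $\mathbb{Z}[e,f,g]$, a homomorphic image of $\mathbb{Z}[x,y,z]/(x^{2}-x,y^{2}-y,z^{2}-z)\cong\mathbb{Z}^{8}$, in which $a$ has all coordinates in $\{0,1,2,3\}$; hence $a(a-1)(a-2)(a-3)=0$, an identity therefore valid throughout $R$. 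Also $-1$ is a sum of three idempotents, so $2^{2}\cdot3\cdot5=0$ in $R$ by Lemma~\ref{lem2.1}, and the Chinese Remainder Theorem gives $R\cong R_{2}\times R_{3}\times R_{5}$ with $4=0$ in $R_{2}$, $3=0$ in $R_{3}$, $5=0$ in $R_{5}$, each factor still satisfying (1); it suffices to show each $R_{p}$ is commutative.

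Three of these are quick. If $3=0$, expanding $a=e+f+g$ gives $a^{3}=a$; if $5=0$, then $a^{5}=a$ (because $c^{5}\equiv c\pmod 5$ for $c\in\{0,1,2,3\}$); in either case the ring is commutative by Jacobson's commutativity theorem, and once $R_{5}$ is commutative, Theorem~\ref{thm2.6} applied to its indecomposable factors forces $R_{5}=0$. If $2=0$, then $a^{2}=a$, so $R_{2}$ is Boolean, hence commutative. The delicate case is $4=0$, $2\neq0$: here $R_{2}/2R_{2}$ has characteristic $2$ and property (1), hence is Boolean by the previous case; since $(2R_{2})^{2}\subseteq 4R_{2}=0$, the ideal $2R_{2}$ is nilpotent, so $J(R_{2})=2R_{2}$ and $R_{2}/J(R_{2})$ is commutative, whence $[s,r]\in J(R_{2})$ always and $[2s,r]=2[s,r]\in 4R_{2}=0$, i.e.\ $J(R_{2})\subseteq Z(R_{2})$. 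Now for any idempotent $e$ and $r\in R_{2}$, put $c=[e,r]\in J(R_{2})\subseteq Z(R_{2})$; a direct check gives $ece=0=(1-e)c(1-e)$, so $c=ec(1-e)+(1-e)ce$, and centrality forces $ec(1-e)=(1-e)ce\in eR_{2}(1-e)\cap(1-e)R_{2}e=0$. Thus $[e,r]=0$, every idempotent of $R_{2}$ is central, and since every element of $R_{2}$ is a sum of idempotents, $R_{2}$ is commutative.

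For $(2)\Rightarrow(3)$: Theorem~\ref{thm3.2} (with $n=1$) gives $J(R)$ nil and $R/J(R)$ with identity $x^{3}=x$, so Theorem~\ref{thm3.4} yields $R/J(R)\cong\bar A\times\bar B$ with $\bar A$ Boolean and $\bar B$ zero or a subdirect product of $\mathbb{Z}_{3}$'s; lifting the central idempotent of this splitting through the nil ideal $J(R)$ gives $R\cong A\times B$ with $A/J(A)\cong\bar A$ and $B/J(B)\cong\bar B$. If $b\in J(B)$, writing $b=e+f+g$ and reducing mod $J(B)$, in each $\mathbb{Z}_{3}$-image the three values lie in $\{0,1\}$ and sum to $0$, forcing $e,f,g\in J(B)$, hence $e=f=g=0$ and $J(B)=0$; so $B=\bar B$. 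For $A$: given $r\in J(A)$, write $r=e+f+g$; reducing mod the Boolean ring $A/J(A)$ and multiplying out shows $efg\in J(A)$, so $efg=0$, whence $r^{3}=3r^{2}-2r$; since $r-1$ is a unit this gives $r^{2}=2r$ and then $r=2(ef+eg+fg)$ with $ef+eg+fg$ idempotent, so $J(A)=2\,\mathrm{idem}(A)$; finally, writing $4=2h$ with $h$ idempotent, multiplication by $h$ gives $4h=4$ while $8=4h$, so $4=0$ in $A$. This puts $R$ in case (c), or in (a) or (b) when a factor is trivial. The reverse $(3)\Rightarrow(1)$ runs the same machinery backwards: in case (a) one has $J(R)^{2}=0$ (so $J(R)$ is nil) and $2\,\mathrm{idem}(R)\subseteq Z(R)$ (since $[2h,r]=2[h,r]=0$, as $[h,r]\in J(R)=2\,\mathrm{idem}(R)$ and $4=0$), so the Peirce argument again makes every idempotent central and $R$ commutative, and every $a$ is a sum of three idempotents via $a=e+g+g$ where $\bar a=\bar e$ is idempotent in $A/J(A)$ and $a-e=2g\in2\,\mathrm{idem}(A)$; case (b) is immediate from Theorem~\ref{thm3.4} since $x^{3}=x$; and case (c) is the direct product.

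The step I expect to be the main obstacle is the characteristic-$4$ case of $(1)\Rightarrow(2)$ — assembling exactly the facts ($R_{2}/2R_{2}$ Boolean, $2R_{2}$ square-zero, $J(R_{2})$ central) needed to run the Peirce-corner argument — together with its mirror in $(2)\Rightarrow(3)$, the somewhat fiddly identification of $J(A)$ with $2\,\mathrm{idem}(A)$ and the deduction that $4=0$; the characteristic $2$, $3$ and $5$ cases are routine once the quartic identity and Jacobson's theorem are in hand.
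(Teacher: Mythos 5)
Your architecture is genuinely different from the paper's --- the paper proves the cycle $(1)\Rightarrow(3)\Rightarrow(2)\Rightarrow(1)$ and only extracts commutativity inside $(3)\Rightarrow(2)$, whereas you prove $(1)\Rightarrow(2)$ head-on by splitting $R$ via $60=0$ into characteristics $4,3,5$ and invoking Jacobson's commutativity theorem --- and most of it checks out: the quartic identity, the characteristic-$4$ Peirce argument, the treatment of the factor $A$ in $(2)\Rightarrow(3)$ (including $J(A)=2\cdot{\rm idem}(A)$ and $4=0$), and $(3)\Rightarrow(1)$ are all sound. But one step fails as written: the claim in $(2)\Rightarrow(3)$ that $J(B)=0$.

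You write $b\in J(B)$ as $b=e+f+g$, reduce modulo $J(B)$, and assert that since in each $\mathbb Z_3$-image the three values lie in $\{0,1\}$ and sum to $0$, all three idempotents must lie in $J(B)$. That is false: $1+1+1=0$ in $\mathbb Z_3$, so in any given component the three idempotents may all reduce to $1$. What the congruence actually yields is $\bar e=\bar f=\bar g$ in $B/J(B)$; one can then salvage part of the argument, since $e-f\in J(B)$ is nilpotent and commuting idempotents satisfy $(e-f)^3=e-f$, forcing $e=f=g$ and $b=3e$, whence $J(B)=3\cdot{\rm idem}(B)$. But this leaves you needing $3=0$ in $B$, and your decomposition (lifting an idempotent of $R/J(R)$ through the nil radical) only gives $3\in J(B)$, i.e.\ $3$ nilpotent. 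The missing computation is essentially the one the paper runs (in $(1)\Rightarrow(3)$ of this proposition and in Theorem~\ref{thm4.2}): since $3\in J(B)$, $2=-1+3$ is a unit of $B$; writing $-1=e+f+g$ gives $-efg=3efg$, so $4efg=0$ and hence $efg=0$; then $-ef=2ef+efg$ gives $3ef=0$ and likewise $3eg=3fg=0$; then $3=3(e+f+g)^2=3(e+f+g)+6(ef+eg+fg)=-3$ forces $6=0$ and so $3=0$ in $B$. Only then does $b^3=b+3(b^2-b)+6efg=b$ hold in $B$, making $B$ reduced and $J(B)=0$. (Alternatively, perform the Chinese Remainder split on $12=0$ \emph{before} identifying $A$ and $B$, as the paper does and as you yourself did in $(1)\Rightarrow(2)$, so that $3=0$ in the relevant factor from the outset.)
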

\begin{proof}
$(2)\Rightarrow (1)$. The implication is clear.

$(3)\Rightarrow (2)$. By \cite[Theorem 1]{HT} , $(3)(b)$ implies $(2)$. So it suffices to show that $(3)(a)$ implies $(2)$. Let us assume that $R/J(R)$ is Boolean with $J(R)=2\cdot {\rm idem}(R)$ and with $4=0$ in $R$. Then ${\rm Nil}(R)=J(R)$. For $a\in R$, $a^2-a$ is nilpotent, so by \cite[Lemma 3.5]{YKZ}, there exists $\theta(t)\in \mathbb Z[t]$ such that $\theta(a)^2=\theta(a)$ and $a-\theta(a)$ is nilpotent. As $a-\theta(a)\in J(R)$, $a-\theta(a)=2h$ where $h^2=h$. Hence $a=\theta(a)+h+h$ is a sum of three idempotents. It remains to show that $R$ is commutative. To do so, we only need to show that every idempotent in $R$ is central. Assume on the contrary that $R$ contains a non-central idempotent, say $e$. Then either $eR(1-e)\not= 0$ or $(1-e)Re\not= 0$. Without loss of generality, we can assume that $eR(1-e)\not= 0$, and let us take $0\not= z\in eR(1-e)$. Consider the Peirce decomposition $R=\begin{pmatrix}eRe&eR(1-e)\\
                       (1-e)Re&(1-e)R(1-e)\end{pmatrix}$. As ${\rm Nil}(R)=J(R)$, we have  $J(R)=\begin{pmatrix}eJ(R)e&eR(1-e)\\
                       (1-e)Re&(1-e)J(R)(1-e)\end{pmatrix}$, so $\begin{pmatrix}0&z\\
                       0&0\end{pmatrix}\in J(R)$, and hence $\begin{pmatrix}0&z\\
                       0&0\end{pmatrix}=2h$ for some $h^2=h\in R$. Write $h=\begin{pmatrix}r&x\\
                       y&s\end{pmatrix}$, so $x=rx+xs$. From 
 $\begin{pmatrix}0&z\\
                       0&0\end{pmatrix}=2h$, it follows that $2r=0$, $2s=0$ and $2x=z$. Hence, $z=2(rx+xs)=(2r)x+x(2s)=0$. This is a contradiction. So every idempotent of $R$ is central.
																						
$(1)\Rightarrow (3)$. Assume that every element of $R$ is a sum of three commuting idempotents. 
Write $-1=e+f+g$ where $e,f,g$ are commuting idempotents in $R$. Then $-efg=(e+f+g)efg=3efg$, so $4efg=0$. Thus, $-4ef=4(e+f+g)ef=8ef+4efg=8ef$, showing that $12ef=0$. Similarly, $12eg=0$ and $12fg=0$. Now we have $6=6(e+f+g)^2=6(e+f+g)+12(ef+eg+fg)=-6$, so $12=0$. By the Chinese Remainder Theorem, $R=A\times B$ where $A\cong R/2^2R$ and $B\cong R/3R$, so $4=0$ in $A$ and $3=0$ in $B$. Moreover, $A,B$ both satisfy $(1)$. For $b\in B$, write $b=e+f+g$ where $e,f,g$ are commuting idempotents in $B$. Then $b^2=b+2(ef+eg+fg)$ and $b^3=b+3(b^2-b)+6efg=b$. So $B$ has the identity $x^3=x$, and hence $B$ is either zero or a subdirect product of $\mathbb Z_3$'s (see \cite[Ex.12.11; p200]{L01}).

We can assume that $A\not= 0$. For $a\in A$, write $a=e+f+g$ where $e,f,g$ are commuting idempotents in $A$. Then $a^2=a+2(ef+eg+fg)$, so $a^2-a\in 2A$. Thus $A/2A$ is Boolean. It follows that $J(A)=2A$, and so $A/J(A)$ is Boolean. For $j\in J(A)$, write $-j=e+f+g$ where $e,f,g$ are commuting idempotents in $A$. So $-jef=ef+ef+efg$, showing that  						$efg=(-j+2)ef\in J(A)$. Hence $2efg=0$. Moreover, as $J(A)^2=0$, $0=(-j)^2=-j+2(ef+eg+fg)$, so $j=2(ef+eg+fg)$, and $(ef+eg+fg)^2=(ef+eg+fg)+6efg=ef+eg+fg$. Hence $J(A)=2\cdot {\rm idem}(A)$.		
\end{proof}

\section{Applications: Matrices as the sum of three involutions}

In this section, we will see that $k=3$ is the smallest positive integer such that, for some ring $R$, every matrix over $R$ is a sum of $k$ involutive matrices, and we show that, for a commutative ring $R$ and $n\ge 1$, 
every matrix in ${\mathbb M}_n(R)$ is a sum of three involutive matrices if and only if $R$ is a subdirect product of $\mathbb Z_3$'s.

The next lemma can be easily seen.
\begin{lem}\label{lem4.1}
Let $R$ be a ring with $2\in U(R)$ and $n\ge 1$. Then:
\begin{enumerate}
\item  $e\mapsto 1-2e$ gives a bijection from ${\rm idem}(R)$ to ${\rm invo}(R)$. 
\item  $a\in R$ is a sum of $n$ idempotents if and only if $n-2a$ is a sum of $n$ involutions. 
\item Every element of $R$ is a sum of $n$ idempotents if and only if every element of $R$ is a sum of $n$ involutions. 
\end{enumerate}
\end{lem}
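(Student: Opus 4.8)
The plan is to prove the three assertions in order, since (2) is obtained by applying (1) termwise and (3) is a formal consequence of (2). The one structural point worth flagging at the outset is that $2=1+1$ is central in any ring, so when $2\in U(R)$ its inverse $2^{-1}$ is central as well; this is what makes the explicit formulas below additive and lets us rearrange products freely, and it is the reason the lemma needs only $2\in U(R)$ rather than commutativity of $R$.

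For part (1), I would exhibit an explicit two-sided inverse. Define $\phi\colon R\to R$ by $\phi(x)=1-2x$ and $\psi\colon R\to R$ by $\psi(y)=2^{-1}(1-y)$; a direct check shows $\phi$ and $\psi$ are mutually inverse bijections of the underlying set $R$. Next, if $e^2=e$ then $(1-2e)^2=1-4e+4e^2=1$, so $\phi$ sends idempotents to involutions; conversely, if $v^2=1$ then $\psi(v)^2=4^{-1}(1-2v+v^2)=4^{-1}(2-2v)=2^{-1}(1-v)=\psi(v)$, so $\psi$ sends involutions to idempotents. Hence $\phi$ restricts to a bijection ${\rm idem}(R)\to {\rm invo}(R)$ whose inverse is the restriction of $\psi$.

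For part (2), I would apply (1) to each summand. If $a=e_1+\cdots+e_n$ with each $e_i$ idempotent, then $n-2a=\sum_{i=1}^n(1-2e_i)$ is a sum of $n$ involutions by (1). Conversely, if $n-2a=v_1+\cdots+v_n$ with each $v_i$ an involution, then summing $\psi(v_i)=2^{-1}(1-v_i)$ yields $\sum_{i=1}^n\psi(v_i)=2^{-1}\big(n-(n-2a)\big)=a$, a sum of $n$ idempotents by (1). Part (3) then follows from (2) together with the observation that $a\mapsto n-2a$ is a bijection of $R$ (with inverse $b\mapsto 2^{-1}(n-b)$): if every element of $R$ is a sum of $n$ idempotents, then by (2) every element of the form $n-2a$, hence every element of $R$, is a sum of $n$ involutions, and the converse is symmetric.

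I do not anticipate any genuine obstacle here; the statement is elementary, which is why the paper flags it as ``easily seen.'' The only place that warrants a line of justification rather than a bare computation is the centrality of $2^{-1}$, needed to know that $\psi$ is well defined as written, additive, and compatible with squaring; with that in hand, every remaining step is a routine calculation.
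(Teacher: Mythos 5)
Your proof is correct and is precisely the standard argument the paper has in mind when it says the lemma ``can be easily seen'': the mutually inverse maps $e\mapsto 1-2e$ and $v\mapsto 2^{-1}(1-v)$, applied termwise, with the observation that $a\mapsto n-2a$ is a bijection of $R$. All computations check out, and your remark that $2^{-1}$ is central (so the lemma holds without commutativity) is a worthwhile point since the statement is for arbitrary rings.
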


While a ring is Boolean if every element is an idempotent, we easily see that every nonzero element of a ring $R$ is an involution if and only if $R\cong \mathbb Z_2$ or $R\cong \mathbb Z_3$. For $a, k\in R$, if $a^2=k$ we say that $a$ is a $k$-involution.

\begin{thm}\label{thm4.2}
The following are equivalent for a ring $R$:
\begin{enumerate}
\item Every element of $R$ is a sum of two involutions.
\item Every element of $R$ is a sum of three commuting involutions.
\item For some $k\in R$, every element of $R$ is a sum of two $k$-involutions.
\item $R$ is a subdirect product of $\mathbb Z_3$'s.
\end{enumerate} 
\end{thm}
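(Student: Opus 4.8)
The plan is to prove the equivalences $(1)\Leftrightarrow(3)$, $(2)\Leftrightarrow(4)$ and $(1)\Leftrightarrow(4)$, the last carrying the real weight. The implication $(1)\Rightarrow(3)$ is immediate with $k=1_R$. For $(3)\Rightarrow(1)$ I would first pin down that $k$ must equal $1$: applying the hypothesis to $1_R$, write $1=a_1+a_2$ with $a_1^2=a_2^2=k$; since $a_2=1-a_1$, expanding $a_2^2=k$ gives $2a_1=1$, so $2\in U(R)$ and hence $4\in U(R)$; then applying the hypothesis to $2\cdot 1_R$, write $2=b_1+b_2$ with $b_1^2=b_2^2=k$, and expanding $b_2^2=(2-b_1)^2=k$ gives $4b_1=4$, so $b_1=1$ and $k=b_1^2=1$. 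Thus $(3)$ says precisely that every element is a sum of two $1$-involutions, i.e.\ of two involutions. In particular each of $(1)$--$(3)$ forces $2\in U(R)$, so by \lemref{lem4.1} (and the observation that $e\mapsto 1-2e$ carries pairwise commuting idempotents to pairwise commuting involutions and back, each being a polynomial in the other) I may freely translate ``sum of $n$ (pairwise commuting) idempotents'' into ``sum of $n$ (pairwise commuting) involutions''.

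Next come the implications involving $(4)$. A subdirect product of copies of $\mathbb Z_3$ satisfies the identities $x^3=x$ and $3=0$; setting $u=2a^2+2a+1$ and $v=a^2+2a+2$, a direct computation with $a^3=a$ and $3=0$ gives $u+v=a$ and $u^2=v^2=1$, so $(4)\Rightarrow(1)$. Since such an $R$ is commutative, applying $(1)$ to $a-1$ and then adding the involution $1_R$ writes every $a$ as a sum of three commuting involutions, so $(4)\Rightarrow(2)$. For $(2)\Rightarrow(4)$: applying the hypothesis to $0$ and writing $0=u+v+w$ with $u,v,w$ commuting involutions, the relation $w=-(u+v)$ gives $2uv=-1$, hence $2\in U(R)$, and squaring $2uv=-1$ (using $uv=vu$, $u^2=v^2=1$) gives $4=1$, i.e.\ $3=0$. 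By \lemref{lem4.1} every element of $R$ is then a sum of three pairwise commuting idempotents, so by \propref{prop3.8} the ring $R$ has one of the three listed forms; since $2\in U(R)$, the summand with $4=0$ must vanish, leaving $R$ a subdirect product of $\mathbb Z_3$'s.

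The core of the argument is $(1)\Rightarrow(4)$. From $(1)$ we already have $2\in U(R)$ and $3=0$ (as above), and by \lemref{lem4.1} every element of $R$ is a sum of two idempotents. The key claim is that $R$ is reduced. To prove it, suppose $a\neq 0$ with $a^2=0$ and write $a=e+f$ with $e^2=e$, $f^2=f$. Then $a^2=0$ yields $ef+fe=-a$; left-multiplying $a^2=0$ by $e$ and by $f$ and simplifying -- here $3=0$ enters through $-2=1$ -- gives $efe=-e+ef$ and $fef=-f+fe$, so that $(ef)^2=e(fef)=-e$. Computing $(ef)^3$ in two ways, as $ef\cdot(ef)^2=-efe=e-ef$ and as $(ef)^2\cdot ef=-ef$, forces $e=0$, whence $a=f=f^2=0$, a contradiction. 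So $R$ has no nonzero square-zero element, hence $\mathrm{Nil}(R)=0$. In a reduced ring every idempotent is central, so every element of $R$, being a sum of two central idempotents, is central; thus $R$ is commutative. Being commutative and reduced, $R$ is a subdirect product of integral domains, each a homomorphic image of $R$ and hence satisfying $3=0$ and ``every element is a sum of two idempotents''; since the only idempotents of a domain are $0$ and $1$, each such domain has at most three elements and is therefore isomorphic to $\mathbb Z_3$. Hence $R$ is a subdirect product of $\mathbb Z_3$'s.

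I expect the delicate point to be exactly the reducedness step in $(1)\Rightarrow(4)$: commutativity and the reduction to $\mathbb Z_3$ are cheap once $\mathrm{Nil}(R)=0$ is known, but killing nilpotents genuinely needs both hypotheses at once -- ``sum of two idempotents'' to produce the pair $e,f$ attached to a square-zero element, and characteristic $3$ to make the two evaluations of $(ef)^3$ collide. The remaining implications are bookkeeping with \lemref{lem4.1}, \propref{prop3.8}, and the elementary identities above.
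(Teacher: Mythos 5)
Your proof is correct, and it departs from the paper's at the places that carry the real weight. For $(1)\Rightarrow(4)$ the paper, after extracting $3=0$ from $1=u+v$ and converting to sums of two idempotents via \lemref{lem4.1}, simply cites \cite[Proposition 6.1]{YKZ}; you instead prove the conclusion from scratch: the chain $efe=-e+ef$, $fef=-f+fe$, $(ef)^2=-e$, and the two evaluations of $(ef)^3$ forcing $e=0$ is a correct, purely ring-theoretic argument (valid without assuming commutativity) that kills square-zero elements, after which centrality of idempotents in reduced rings gives commutativity and the passage to integral-domain quotients finishes. This makes the theorem self-contained at the price of a short computation. For $(4)\Rightarrow(1)$ and $(4)\Rightarrow(2)$ the paper invokes Hirano--Tominaga to write $a-2=e+f$ and shifts by $1$; your explicit involutions $u=2a^2+2a+1$, $v=a^2+2a+2$ (which do satisfy $u+v=a$ and $u^2=v^2=1$ given $a^3=a$ and $3=0$) again avoid the external citation. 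For $(2)\Rightarrow(4)$ you route through \propref{prop3.8} and discard the $4=0$ factor using $2\in U(R)$, where the paper redoes the commuting-idempotent computation and cites \cite[Ex.~12.11]{L01}; both are fine. Your $(3)\Rightarrow(1)$ pins down $k=1$ somewhat more directly than the paper's detour through $k=4^{-1}$. Two minor presentational remarks: you are right that \lemref{lem4.1} needs the (easy) upgrade to preserve pairwise commutativity, which the paper uses silently; and your appeal to ``$3=0$ as above'' under hypothesis $(1)$ is not literally established earlier in your text for two involutions, though it is the one-line computation $1=(u+v)^2=2+2uv$, hence $-1=2uv$ and $1=4u^2v^2=4$.
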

 
\begin{proof}
$(1)\Rightarrow (4)$. Write $1=u+v$ where $u^2=1$ and $v^2=1$. Then $1=(u+v)^2=u^2+v^2+2uv=2+2uv$, so $-1=2uv$. Hence $1=(-1)^2=(2uv)^2=4u^2v^2=4$. This shows that $3=0$ in $R$. Thus, by Lemma \ref{lem4.1}, every element of $R$ is a sum of two idempotents. By \cite[Proposition 6.1]{YKZ}, $R$ is a subdirect product of $\mathbb Z_3$'s.

$(2)\Rightarrow (4)$. Write $2=u+v+w$ where $u, v, w$ are commuting involutions. Then $4=(u+v+v)^2=u^2+v^2+w^2+2(uv+uw+vw)=3+2(uv+uw+vw)$, so $1=2(uv+uw+vw)$. This shows that $2\in U(R)$. Thus, by Lemma \ref{lem4.1}, every element of 
$R$ is a sum of three commuting idempotents.

Write $-1=e+f+g$ where $e,f,g$ are commuting idempotents in $R$. Then $-efg=(e+f+g)efg=3efg$, so $4efg=0$, and hence $efg=0$ (as $2\in U(R)$). Thus, $-ef=(e+f+g)ef=2ef+efg=2ef$, showing that $3ef=0$. Similarly, $3eg=0$ and $3fg=0$. Now we have $3=3(e+f+g)^2=3(e+f+g)+6(ef+eg+fg)=-3$, so $6=0$, and hence $3=0$ in $R$.

For $a\in R$, write $a=e+f+g$ where $e,f,g$ are commuting idempotents in $R$. Then $a^2=a+2(ef+eg+fg)$ and $a^3=a+3(a^2-a)+6efg=a$. So $R$ has identity $x^3=x$, and hence $R$ is either zero or a subdirect product of $\mathbb Z_3$'s (see \cite[Ex.12.11; p200]{L01}).

$(4)\Rightarrow (1)$. Suppose that $(4)$ holds. Then $R$ has identity $x^3=x$, and hence, by \cite[Theorem 1]{HT}, every element of $R$ is a sum of two idempotents. So, for $a\in R$, $-2+a=e+f$ where $e^2=e$ and $f^2=f$. Thus, as $3=0$ in $R$, $a=(e+1)+(f+1)$ is a sum of two involutions.  

$(4)\Rightarrow (2)$. Suppose that $(4)$ holds. Then $R$ has identity $x^3=x$, and hence, by \cite[Theorem 1]{HT}, every element of $R$ is a sum of three commuting idempotents. So, for $a\in R$, $a=e+f+g$ where $e^2=e$, $f^2=f$ and $g^2=g$. Thus, as $3=0$ in $R$, $a=(e+1)+(f+1)+(g+1)$ is a sum of three commuting involutions. 

$(1)\Rightarrow (3)$. The implication is clear.

$(3)\Rightarrow (1)$. Write $-1=a+b$ where $a^2=b^2=k$. Then $1=(a+b)^2=a^2+b^2+2ab=2k+2ab=2(k+ab)$, so $2\in U(R)$ and $2ab=1-2k$. Thus, $(1-2k)^2=4a^2b^2=4k^2$, showing that $k=4^{-1}$. 
Write $2=u+v$ where $u^2=v^2=k$. Then $4=(u+v)^2=u^2+v^2+2uv=2^{-1}+2uv$, so $8=1+4uv$, i.e., $7=4uv$. It follows that $49=16u^2v^2=1$, or $48=0$. As $2\in U(R)$, it must be that $3=0$, so $k=4^{-1}=1$.  
\end{proof}

By Theorem \ref{thm4.2}, for any ring $R$ and any $n\ge 2$, some matrix in ${\mathbb  M}_n(R)$ cannot be a sum of two involutive matrices. On the other hand, every matrix in ${\mathbb M}_n(\mathbb Z_3)$ is a sum of three idempotent matrices. So, by Lemma \ref{lem4.1},  every  matrix in ${\mathbb M}_n(\mathbb Z_3)$ is a sum of three involutive matrices. 
Hence,  $k=3$ is the smallest positive integer such that, for some ring $R$, every matrix over $R$ is a sum of $k$ involutive matrices.
Next, as an application of what proved in previous sections, we determine commutative rings over which every matrix is a sum of three involutive matrices.

\begin{thm}\label{thm4.3}
Let $R$ be a commutative ring and $n\ge 1$. The following are equivalent:
\begin{enumerate}
\item  Every matrix in $\mathbb M_n(R)$ is a sum of three involutive matrices.
\item  Every indecomposable factor ring of $R$ is isomorphic to $\mathbb Z_3$.
\item $R$ is a subdirect product of $\mathbb Z_3$'s.
\end{enumerate}
\end{thm}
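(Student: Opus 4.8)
The plan is to establish the cycle $(1)\Rightarrow(2)\Rightarrow(3)\Rightarrow(1)$, reducing everything to the idempotent results already proved by means of the idempotent--involution dictionary of Lemma~\ref{lem4.1}; the one feature with no analogue in the idempotent setting is that one first has to force the characteristic to be odd. Accordingly, the first step I would carry out is to show that $(1)$ implies $2\in U(R)$. Suppose not, and pick a maximal ideal $P$ of $R$ with $2\in P$; then $F:=R/P$ is a field of characteristic $2$, and, reducing modulo $P$, every matrix in ${\mathbb M}_n(F)$ is a sum of three involutions. Over a field of characteristic $2$, however, $U^2=I_n$ is equivalent to $(U-I_n)^2=0$, so every involution is of the form $I_n+N$ with $N^2=0$; hence any sum of three involutions equals $3I_n+(N_1+N_2+N_3)=I_n+(N_1+N_2+N_3)$ with $N_i^2=0$, and, since a square-zero matrix over a field has trace $0$, every such sum has trace $n\cdot 1_F$. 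But the zero matrix and the matrix unit $E_{11}$ have traces $0_F\neq 1_F$, so they cannot both be sums of three involutions --- a contradiction. Therefore $2\in U(R)$, and hence $2I_n\in U({\mathbb M}_n(R))$.

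Once $2$ is a unit, Lemma~\ref{lem4.1}(2), applied to the ring ${\mathbb M}_n(R)$ with three summands, shows that a matrix $A$ is a sum of three idempotents exactly when $3I_n-2A$ is a sum of three involutions; since by hypothesis every matrix is a sum of three involutions and $A\mapsto 3I_n-2A$ is a bijection of ${\mathbb M}_n(R)$, every matrix in ${\mathbb M}_n(R)$ is a sum of three idempotents. Now let $R'=R/I$ be any indecomposable factor ring of $R$. Reducing modulo $I$, every matrix in ${\mathbb M}_n(R')$ is a sum of three idempotents, so Theorem~\ref{thm3.2} gives $R'\cong{\mathbb Z}_2$, ${\mathbb Z}_3$ or ${\mathbb Z}_4$; but $2\in U(R)$ forces $2\in U(R')$, which excludes ${\mathbb Z}_2$ and ${\mathbb Z}_4$, so $R'\cong{\mathbb Z}_3$. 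This proves $(2)$.

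For $(2)\Rightarrow(3)$ I would use Birkhoff's subdirect representation theorem: $R$ is a subdirect product of subdirectly irreducible factor rings, each of which is indecomposable and hence, by $(2)$, isomorphic to ${\mathbb Z}_3$, so $R$ is a subdirect product of ${\mathbb Z}_3$'s. For $(3)\Rightarrow(1)$: a subdirect product of ${\mathbb Z}_3$'s satisfies the identities $x^3=x$ and $3\cdot 1=0$, and $x^3=x$ forces ${\rm Nil}(R)=0$; by Theorem~\ref{thm3.4} every matrix in ${\mathbb M}_n(R)$ is a sum of three idempotents, and since $3=0$ makes $2=-1$ a unit, Lemma~\ref{lem4.1}(3), applied to ${\mathbb M}_n(R)$ with three summands, upgrades this to: every matrix in ${\mathbb M}_n(R)$ is a sum of three involutions.

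Almost all of this is routine assembly of the earlier theorems; the step I expect to be the crux is $(1)\Rightarrow 2\in U(R)$, that is, the observation that in characteristic $2$ every sum of three involutions has one and the same trace, which is what allows the idempotent machinery of the previous sections to be brought to bear. An alternative would be to prove $(3)\Rightarrow(2)$ directly --- an indecomposable factor ring of a subdirect product of ${\mathbb Z}_3$'s is a field satisfying $x^3=x$ with $3=0$, hence ${\mathbb Z}_3$ --- but routing the last implication through $(3)$ as above is the most economical.
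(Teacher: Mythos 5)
Your proof is correct, and its overall architecture matches the paper's: establish that $2\in U(R)$, use Lemma~\ref{lem4.1} to translate ``sum of three involutions'' in ${\mathbb M}_n(R)$ into ``sum of three idempotents,'' and then invoke Theorems~\ref{thm3.2}, \ref{thm3.4} and \ref{thm2.6} together with Birkhoff's theorem. The one step where you genuinely diverge is the exclusion of characteristic $2$. The paper reduces an involutive matrix over a field $F$ with $2=0$ to its rational canonical form, checks that each involutive companion block is either $1\times 1$ or $\left(\begin{smallmatrix}0&1\\1&0\end{smallmatrix}\right)$, and deduces that every involution has trace $n\cdot 1_F$; it then has to treat even $n$, odd $n>1$, and $n=1$ separately. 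Your observation that in characteristic $2$ one has $U^2=I_n$ if and only if $(U-I_n)^2=0$, so that $U=I_n+N$ with $N$ nilpotent and hence ${\rm tr}(U)=n\cdot 1_F$, reaches the same conclusion in one line, uniformly in $n$, and without canonical forms; comparing the traces of $0$ and $E_{11}$ then finishes it. A second, minor structural difference: you kill $2$ globally by passing to a maximal ideal of $R$ containing $2$ and then run $(1)\Rightarrow(2)\Rightarrow(3)\Rightarrow(1)$, whereas the paper works inside an indecomposable factor ring $R'$, first showing every field factor of $R'$ is ${\mathbb Z}_3$ to get $3\in J(R')$ and hence $2\in U(R')$, and proves $(1)\Rightarrow(3)\Rightarrow(2)\Rightarrow(1)$. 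Both orderings close the cycle of implications; yours is the more economical write-up, the paper's canonical-form computation yields slightly more information about the shape of involutive matrices in characteristic $2$.
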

\begin{proof} 
$(1)\Rightarrow (3)$. Let $R'$ be an indecomposable factor ring of $R$ and $F$ be a field that is a factor ring of $R'$. We first show that $2\not= 0$ in $F$. Assume that $2=0$ in $F$. Let $A$ be an involutive matrix in $\mathbb M_n(F)$.  It is well-known (see \cite[p.192]{J85}) that $A$ is similar to its rational canonical form 
$B=\begin{pmatrix}B_1&&0\\
                       &\ddots&\\
                       0&&B_s\end{pmatrix}$, where $s\ge 1$, $B_i$ is a companion matrix of size $n_i$ and $n_1+ n_2+\cdots+n_s=n$.
As $A$ is involutive, $B$ must be involutive, so each $B_i$ is involutive.
It is easily verified that, if $C$ is
an involutive companion matrix over a field, then $C$ has size $1\times 1$, or $C=\begin{pmatrix}
                        0&1\\
                       1&0\end{pmatrix}$. Thus, we have either $n_i=1$ or $n_i=2$.  So we can assume that, for some $k$, $n_i=1$ for $i=1, \ldots, k$ and $n_i=2$ for $i=k+1, \ldots, s$. Moreover, ${\rm tr}(A)={\rm tr}(B)={\rm tr}(B_1)+\cdots+{\rm tr}(B_k)=k\cdot 1_F$. 
If $n$ is even, then $k$ is even, so ${\rm tr}(A)=0$; If $n$ is odd, then $k$ is odd, so ${\rm tr}(A)=1_F$. Thus, for even $n$, every involutive matrix in ${\mathbb M}_n(F)$ has trace $0$ and that for odd $n$, every involutive matrix in ${\mathbb M}_n(F)$ has trace $1_F$.  
Therefore, for even $n$, $E_{11}$ is not a sum of (three) involutive matrices, and for odd $n>1$, $E_{11}+E_{22}$ is not   a sum of three    
involutive matrices. Moreover, in the case $n=1$, 
$1\in \mathbb Z_2$ is not a sum of three involutions. 
We have proved that $2\not= 0$ in $F$. As every matrix in $\mathbb M_n(F)$ is a sum of three involutive matrices, every matrix in $\mathbb M_n(F)$ is a sum of three idempotent matrices by Lemma \ref{lem4.1}. So $F\cong \mathbb Z_3$ by Theorem \ref{thm2.6}. Hence, for every matrix ideal $M$ of $R'$, $3\in M$, and it follows that $3\in J(R')$. So $2\in U(R')$. 
As every matrix in $\mathbb M_n(R')$ is a sum of three involutive matrices, every matrix in $\mathbb M_n(R')$ is a sum of three idempotent matrices by Lemma \ref{lem4.1}. By Theorem \ref{thm3.2}, $R'\cong \mathbb Z_3$ (as $2\in U(R')$). By Birkhoff's Theorem, $R$ is a subdirect product of subdirectly irreducble rings $\{R_\alpha\}$. For each $\alpha$, $R_\alpha$ is indecomposable, so $R_\alpha\cong \mathbb Z_3$. Hence, $R$ is a subdirect product of $\mathbb Z_3$'s. 

$(3)\Rightarrow (2)$. By $(3)$, $R$ has identity $x^3=x$ and $2\in U(R)$. Let $S$ be  an indecomposable factor ring of $R$. Then 
$S$ has identity $x^3=x$ and $2\in U(S)$. For any $0\not= a\in S$, $a^2$ is a nontrivial idempotent, so $a^2=1$. Thus, $S$ is a field, and it follows that $S\cong \mathbb Z_3$.

$(2)\Rightarrow (1)$. Suppose that $(2)$ holds. Then $2\in U(R)$ and, By Theorem \ref{thm2.6} and Lemma \ref{lem3.3}, every matrix in $\mathbb M_n(R)$ is a sum of three idempotent matrices. Hence, by Lemma \ref{lem4.1}, 
every matrix in $\mathbb M_n(R)$ is a sum of three involutive matrices. 
\end{proof}

\begin{rem}
The proof of $(1)\Rightarrow (3)$ in Theorem \ref{thm4.3} implies that, if $\mathbb Z_2$ is a factor ring of $R$, then, for any $n, k\ge 1$, there is a matrix in $\mathbb M_n(R)$ that is not a sum of $k$ involutions.  
\end{rem}

\section*{Acknowledgments} 
The authors are grateful to the referee and Professor Mikhail Chebotar for valuable comments and suggestions.  
Part of the work was carried out when Zhou was visiting Guangxi Teachers Education University and Qinzhou University.  He gratefully acknowledges the hospitality from the host institutes. This research was supported by Natural Science Foundation of China (11661014, 11661013, 11461010), Guangxi Science Research and Technology Development Project (1599005-2-13), Guangxi Natural Science Foundation (2016GXNSFDA380017), the Scientific Research Fund of Guangxi Education Department (KY2015ZD075), and a Discovery Grant from NSERC of Canada.


\begin{thebibliography}{99}
\bibitem{de1} C. de Seguins Pazzis, On decomposing any matrix as a linear combination of three idempotents, {\it Linear Algebra Appl.} {\bf 433} (2010), 843-855.
 \bibitem{de} C. de Seguins Pazzis,  On sums of idempotent matrices over a field of positive characteristic, {\it Linear Algebra Appl.} {\bf 433} (4) (2010),  856-866.
.


\bibitem{HT} Y. Hirano and H. Tominaga,  Rings in which every element is the sum of two idempotents, {\it Bull. Austral. Math. Soc.} {\bf 37}(2) (1988), 161-164. 

\bibitem{J85}  N. Jacobson,  {\it Basic algebra. I.} Second edition. W. H. Freeman and Company, New York, 1995. 

\bibitem{L01}T.Y. Lam, A First Course in Noncommutative Rings, Second Edition, Grad. Texts in Math., vol. {\bf 131}, Springer-Verlag, New York, 2001.
\bibitem{SG} G. Song and X. Guo, Diagonability of idempotent matrices over noncommutative rings, {\it Linear Algebra Appl.} {\bf 297} (1999), 1-7.
\bibitem{S}  A. Steger, Diagonability of idempotent matrices, {\it Pacific J. Math.} {\bf 19} (3) (1966), 535-541.
\bibitem{YKZ} Zhiling Ying, Tamer Kosan,  Y. Zhou, Rings in which every element is a sum of two tripotents, {\it Canad. Math. Bull.} {\bf 59}(3) (2016), 661-672. 
\end{thebibliography}
\end{document}